\numberwithin{equation}{section}
\crefname{appsec}{appendix}{appendices} 
\definecolor{dkgreen}{rgb}{0,0.6,0}
\definecolor{gray}{rgb}{0.5,0.5,0.5}
\definecolor{mauve}{rgb}{0.58,0,0.82}
\tiny\color{gray},
\newtheorem{theorem}{Theorem}[section]
\newtheorem{example}{Example}[section]
\newtheorem{lemma}{Lemma}[section]
\newtheorem{corollary}{Corollary}[section]
\newtheorem{remark}{Remark}[section]
\DeclareMathOperator\erf{erf}
\DeclareMathOperator{\spn}{span}
\DeclareMathOperator{\spp}{supp}
\definecolor{myBrown}{rgb}{0.6 0.4 0.2}
\definecolor{myOrange}{rgb}{1.0 0.6 0.2}
\definecolor{myLightGray}{RGB}{235,235,235}
\definecolor{myViolet}{RGB}{153,50,204}
\newcommand\rev[1]{{\color{black}#1}}
\newcommand{\fx}[1]{{\color{black}#1}}
\newcommand{\bbN}{\mathbb{N}}
\newcommand{\fkJ}{\mathcal{I}}
\newcommand{\keywords}[1]{\textbf{Key words.} #1}
\newcommand{\AMS}[1]{\textbf{AMS subject classifications.} #1}
\title{A posteriori error estimation and adaptivity in stochastic Galerkin FEM
for parametric\\ elliptic PDEs: beyond the affine case\thanks{This work was supported by the EPSRC under grant EP/P013791/1
and by The Alan Turing Institute under the EPSRC grant EP/N510129/1.}}
\author{Alex Bespalov\footnotemark[2] \and Feng Xu\footnotemark[2]}
\begin{document}

\date{}
\maketitle

\long\def\symbolfootnote[#1]#2{\begingroup
\def\thefootnote{\fnsymbol{footnote}}\footnote[#1]{#2}\endgroup}

\renewcommand{\thefootnote}{\fnsymbol{footnote}}
\footnotetext[2]{School of Mathematics, University of Birmingham,
Edgbaston, Birmingham B15 2TT ({\tt a.bespalov@bham.ac.uk}, {\tt f.xu.2@bham.ac.uk}).}

\renewcommand{\thefootnote}{\arabic{footnote}}

\begin{abstract}
We consider a linear elliptic partial differential equation (PDE) with a generic uniformly bounded parametric coefficient.
The solution to this PDE problem is approximated in the framework of stochastic Galerkin finite element methods.
We perform a posteriori error analysis of Galerkin approximations and derive a reliable and efficient estimate
for the energy error in these approximations.
Practical versions of this error estimate are discussed and tested numerically for
a model problem with non-affine parametric representation of the coefficient.
Furthermore, we use the error reduction indicators
derived from spatial and parametric error estimators to guide an adaptive solution algorithm
for the given parametric PDE problem.
The performance of the adaptive algorithm is tested numerically for
model problems with two different non-affine parametric representations of the coefficient.
\end{abstract}

\keywords{stochastic Galerkin methods, stochastic finite element methods, parametric PDEs, a posteriori error estimation, adaptive methods, sparse polynomial approximation, generalized polynomial chaos expansion}

\AMS{35R60, 65C20, 65N30, 65N15}

\section{Introduction} \label{sec:intro}

Partial differential equations (PDEs) with uncertain or parameter-dependent inputs
arise in mathematical models of many physical phenomena as well as in engineering applications.
Stochastic Galerkin finite element method (sGFEM) is commonly used for solving
such PDE problems numerically, in particular, when the input data and solutions are sufficiently smooth
functions of parameters.
The sGFEM solution is sought in the tensor product of a finite element space defined on the physical domain and
a multivariable polynomial space on the parameter domain.
Even if a moderate number of parameters is used to represent the problem inputs, the cost
associated with computing high-fidelity sGFEM approximations quickly becomes prohibitive, due to
fast growth of the dimension of the tensor product space.
An adaptive approach to constructing approximation spaces provides a remedy to this computational bottleneck.
Based on rigorous a posteriori error analysis of computed solutions,
adaptive solution techniques build spatial and parametric components
of approximations incrementally in the course of numerical computation,
leading to accelerated convergence and reduced computational cost.

For elliptic PDE problems with \emph{affine-parametric} coefficients,
several adaptive sGFEM algorithms have been recently proposed and analyzed,
see, e.g., \cite{gittelson13,2014-Eigel-vol270,2015-Eigel-p1367,2016-Bespalov-vol38,em16,2018-Bespalov-p243,2019-Bespalov-p2359}.
A range of the underlying a posteriori error estimation techniques is used in these and other works
in order to guide adaptive refinement
(e.g., residual-based, local equilibration, and hierarchical a posteriori error estimators and error indicators to name but a few).
By contrast, 
the sGFEM-based numerical schemes for problems
with \emph{non-affine} parametric representations of coefficients are significantly less well developed.
As far as adaptive stochastic Galerkin approximations are concerned,
the only work we are aware of is~\cite{EigelMPS_ASG},
where the adaptive sGFEM procedure driven by reliable \emph{residual-based} error indicators
is developed for linear elliptic PDEs with lognormal coefficients.
It is worth noting that, due to unboundedness of coefficients,
a well-posed weak formulation of this problem needs to be introduced in problem-dependent weighted spaces,
as presented in~\cite{2011-Schwab-p291}.
Practical feasibility of the adaptive algorithm in~\cite{EigelMPS_ASG} is ensured by
adaptive discretizations of the lognormal coefficient represented in a hierarchical tensor format,
as described in~\cite{2017-Eigel-p765}, under the assumption that the errors in such discretizations are small.

In this paper, we consider a linear elliptic PDE with a \emph{generic} parametric coefficient.
\rev{In particular, our analysis is not restricted
to any specific form of the parametric coefficient (affine, quadratic, log-uniform, etc.).}
Assuming uniform boundedness of the coefficient,
which is a minimal requirement to ensure well-posedness of the weak formulation in standard Lebesgue--Bochner spaces,
we derive a reliable and efficient a posteriori estimate of the energy error in sGFEM approximations.
\rev{This extends the analysis of hierarchical error estimators presented in~\cite{2014-Bespalov-vol36,2016-Bespalov-vol38}
and fills a gap in the existing theory}.
Two practical examples of hierarchical error estimates are considered in detail
and studied numerically for the steady-state diffusion problem
with non-affine parametric representation of the coefficient.
We then present an adaptive algorithm driven by the error reduction indicators
derived from hierarchical a posteriori error estimators in the spirit of~\cite{2016-Bespalov-vol38,2018-Bespalov-p243}.
The performance of the adaptive algorithm is tested numerically for two non-affine parametric representations
of the diffusion coefficient.

The rest of the paper is structured as follows.
The model problem is introduced in section~\ref{sec:problem};
its Galerkin approximation and a posteriori error estimation are presented in section~\ref{sec:apprerr}.
The generalized polynomial chaos (gPC) expansion of the parametric coefficient and
the associated practical aspects of the developed error estimation strategy are discussed in section~\ref{sec:diffcoe},
while the results of numerical tests are reported in section~\ref{sec:numer:errest}.
The adaptive algorithm is proposed in section~\ref{sec:adaptive},
and its performance is tested in numerical experiments described in section~\ref{sec:numer:adapt}.
In Appendix~\ref{appA}, we derive explicit formulae for calculating the gPC expansion coefficients for
parametric exponential and quadratic functions.

\section{Stochastic steady-state diffusion problem} \label{sec:problem}

Let $D \subset \mathbb{R}^2$ be a bounded (spatial) domain with a Lipschitz polygonal boundary $\partial D$, and let
$\Gamma \coloneqq \prod_{m = 1}^{\fx{\infty}} \Gamma_m$ be
the parameter domain with bounded \rev{intervals} $\Gamma_m \subset \mathbb{R}$.
Let $H_0^1(D)$ be the usual Sobolev space of functions in $H^1(D)$ vanishing at the boundary $\partial D$ in the sense of traces. \fx{We will use the standard norm in $H_0^1(D)$ as $\norm{v}_{H_0^1(D)} \coloneqq \norm{\nabla v}_{L^2(D)}$.}
As an example of model problem, we consider the homogeneous Dirichlet problem for the parametric steady-state diffusion equation
\begin{equation}
\label{paraPDE}
\begin{aligned}
-\nabla \cdot (T(\bm{x}, \bm{y})\nabla u(\bm{x}, \bm{y})) &= f(\bm{x}),  &&\bm{x} \in D,  ~ \bm{y} = (y_1, y_2, \dots) \in \Gamma, \\
u(\bm{x}, \bm{y}) &= 0,  &&\bm{x} \in \partial D, ~ \bm{y} \in \Gamma,
\end{aligned}
\end{equation}
where $f \in H^{-1} (D)$ and $\nabla$ denotes differentiation with respect to $\bm{x}$ only.
We assume that the parameters $y_m$, \fx{$m \in \bbN$}, are the images of \emph{independent} random variables with cumulative \rev{distribution} function $\pi_m(y_m)$ and probability density function $q_m(y_m) = \dif \pi_m(y_m) /\dif y_m$. Then for the multivariate random variable formed by all independent univariate random variables, the joint cumulative \rev{distribution} function  and the joint probability density function are
$\pi (\bm{y}) \coloneqq \prod_{m = 1}^{\infty} \pi_m(y_m)$ and
$q(\bm{y}) \coloneqq \prod_{m = 1}^{\infty} q_m(y_m)$, respectively.
Since each $\Gamma_m$ is bounded, we can always rescale the corresponding univariate random variable such that it takes values in $[-1, 1]$.
Therefore, without loss of generality, we assume that $\Gamma_m \coloneqq [-1, 1]$ for all $m \in \bbN$.

Note that each $\pi_m$ is a  probability measure on $(\Gamma_m, \mathcal{B}(\Gamma_m))$,
where $\mathcal{B}(\Gamma_m)$ is the Borel $\sigma$-algebra on $\Gamma_m$.
Accordingly, $\pi$ is a probability measure on $(\Gamma, \mathcal{B}(\Gamma))$,
where $\mathcal{B}(\Gamma)$ is the Borel $\sigma$-algebra on $\Gamma$.
Then $L_{\pi_m}^2(\Gamma_m)$ (resp., $L_{\pi}^2(\Gamma)$) represents the Lebesgue space
of \rev{equivalence classes of} functions $v: \Gamma_m \to \mathbb{R}$ (resp., $v: \Gamma \to \mathbb{R}$)
that are square integrable on $\Gamma_m$ (resp.,~$\Gamma$)
with respect to the measure $\pi_m$ (resp., $\pi$), and
$\langle\cdot,\cdot\rangle_{\pi_m}$ (resp., $\langle\cdot,\cdot\rangle_{\pi}$) denotes the associated inner product:
$\langle f, g\rangle_{\pi_m} \coloneqq \int_{\Gamma_m} q_m(y_m) f(y_m) g(y_m) \dif y_m$ for $f, g \in L_{\pi_m}^2(\Gamma_m)$
(resp., $\langle f, g\rangle_{\pi} \coloneqq \int_{\Gamma} q(\bm{y}) f(\bm{y}) g(\bm{y}) \dif \bm{y}$ for $f, g \in L_{\pi}^2(\Gamma)$).
For a Hilbert space $H$ of functions on $D$, we will denote by $L_{\pi}^2(\Gamma; H)$ the space
of strongly measurable \rev{equivalence classes of} functions $v: D\times \Gamma \to \mathbb{R}$ such that
\begin{align*}
   \norm{v}_{\fx{L_{\pi}^2(\Gamma; H)}} \coloneqq \left(\fx{\int_{\Gamma} q(\bm{y}) \norm{v(\cdot, \bm{y})}_{H}^2 \dif \bm{y}}\right)^{1/2} < +\infty.
\end{align*}
In particular, we will denote
$V \coloneqq L_{\pi}^2(\Gamma; H_0^1(D))$ and $W \coloneqq L_{\pi}^2(\Gamma; L^2(D))$.

The weak formulation of (\ref{paraPDE}) reads as follows: find $u \in V$ such that
\begin{align}
\label{wf}
B(u, v) = F(v) \quad \forall v \in V,
\end{align}
where the symmetric bilinear form $B(\cdot, \cdot)$ and the linear functional $F(\cdot)$ are defined by
\begin{align}
\label{tbf}
B(u,v) &\coloneqq \int_{\Gamma}q(\bm{y})\int_D T(\bm{x}, \bm{y}) \nabla u(\bm{x}, \bm{y}) \cdot \nabla v(\bm{x}, \bm{y})\dif \bm{x} \dif \bm{y},  \\[4pt]
\label{lf}
F(v) &\coloneqq \int_{\Gamma}q(\bm{y})\int_D f(\bm{x}) v(\bm{x}, \bm{y})\dif \bm{x} \dif \bm{y}.
\end{align}

To ensure the well-posedness of (\ref{wf}), we make the following assumption on the parametric diffusion coefficient
\rev{$T \in L_{\pi}^{\infty}(\Gamma; L^{\infty}(D))$}:
there exist constants $\alpha_{\min}$  and $\alpha_{\max}$ such~that
\begin{align}
\label{boundT}
0<\alpha_{\min} \leq T(\bm{x}, \bm{y}) \leq \alpha_{\max} < \infty \quad \hbox{a.e. in $D \times \Gamma$}.
\end{align}
In particular, this implies that $B(\cdot,\cdot)$ is continuous and elliptic on $V$.
Therefore, 
$B(\cdot, \cdot)$ defines an inner product in $V$ which induces the norm
$\norm{v}_B \coloneqq B(v,v)^{1/2}$ that is equivalent to $\norm{v}_V$, i.e.,
\begin{align} \label{Bineq}
     \alpha_{\min} \norm{v}_V^2  \leq \fx{\norm{v}_B^2} \leq \alpha_{\max} \norm{v}_V^2  \quad \forall v \in V.
\end{align}

\section{Galerkin approximation and a posteriori error estimation} \label{sec:apprerr}

\subsection{Galerkin approximation}

Let us introduce the finite-dimensional approximation of the weak problem (\ref{wf}).
Problem~(\ref{wf}) can be discretized by using Galerkin projection onto any finite-dimensional subspace of $V$.
Note that the space $V = L_{\pi}^2(\Gamma; H_0^1(D))$
is isometrically isomorphic to the tensor product Hilbert space $H_0^1 (D)\otimes L_{\pi}^2(\Gamma)$
(see, e.g.,~\cite[Theorem~B.17, Remark~C.24]{2011-Schwab-p291}).
Hence we can construct the finite-dimensional subspace of $V$ by tensorizing a finite-dimensional subspace of $H_0^1 (D)$ and a finite-dimensional subspace of $L_{\pi}^2(\Gamma)$.

For the finite-dimensional subspace of $H_0^1 (D)$, we choose the finite element space
$X = \spn\{\phi_1, \dots, \phi_{n_X}\}$, where $\phi_i$ are standard finite element basis functions and $n_X = \dim(X)$.

Let us now introduce the finite-dimensional (polynomial) subspaces of $L_{\pi}^2(\Gamma)$.
To that end, we consider 
\fx{the following set of finitely supported sequences:
\begin{align*}
   \mathcal{I} \coloneqq
   \big\{\bm{\alpha} = (\alpha_1, \alpha_2, \cdots) \in \mathbb{N}_0^{\mathbb{N}};\; \max(\spp \bm{\alpha}) < \infty\big\},
\end{align*}
where $\spp \bm{\alpha} = \{m \in \mathbb{N};\; \alpha_m \neq 0\}$.
The set $\mathcal{I}$, as well as any of its subsets, will be called the \emph{index set},
and the elements $\bm{\alpha} \in \mathcal{I}$ will be called the \emph{(multi-)indices}.
For each $m \in \bbN$, let $\{p_n^m\}_{n\in\mathbb{N}_0}$ denote the set of univariate polynomials on $\Gamma_m$
that are orthonormal with respect to the inner product $\langle\cdot, \cdot\rangle_{\pi_m}$ in $L_{\pi_m}^2(\Gamma_m)$.
Then we can define the following tensor product polynomials:
\begin{align*}
p_{\bm{\alpha}}(\bm{y}) \coloneqq \prod_{m=1}^{\infty} p_{\alpha_m}^m (y_m)
   = \prod_{m \in \spp \bm{\alpha}} p_{\alpha_m}^m (y_m) \quad \forall \bm{\alpha} \in \mathcal{I}.
\end{align*}
The countable set $\{p_{\bm{\alpha}};\; \bm{\alpha} \in \fkJ\}$
forms an orthonormal basis of $L_{\pi}^2(\Gamma)$ (see, e.g., \cite[section~3.3]{2012-Ernst-p317})}.
Given a finite index set $\mathcal{P} \subset \fkJ$, the space of tensor product polynomials
$ P_\mathcal{P} \coloneqq \spn \left\{p_{\bm{\alpha}};\; \bm{\alpha} \in \mathcal{P}\right\}$
defines a finite-dimensional subspace of  $L_{\pi}^2(\Gamma)$.

With both spaces $X \subset H_0^1(D)$ and $ P_\mathcal{P} \subset L_{\pi}^2(\Gamma)$, we can now define the finite-dimensional subspace  $V_{X\mathcal{P}} \coloneqq X \otimes  P_\mathcal{P} \subset V$
and write the discrete formulation of (\ref{wf}) as follows:
find $u_{X\mathcal{P}} \in V_{X\mathcal{P}}$ such that
\begin{align}
\label{dwf}
B(u_{X\mathcal{P}}, v) = F(v) \quad \forall v \in V_{X\mathcal{P}}.
\end{align}
Hereafter, we assume that $\mathcal{P}$ always contains the zero-index $\bm{0} \coloneqq \fx{(0, 0, \dots)}$.

\subsection{A posteriori error estimation}

The aim of this subsection is to generalize the results of~\cite{2016-Bespalov-vol38} to
the case of the diffusion coefficient $T(\bm{x}, \bm{y})$ satisfying only the boundedness assumption~\eqref{boundT}
(which is a minimal assumption that guarantees the well-posedness of the weak formulation~\eqref{wf}).

We follow the classical hierarchical a posteriori error estimation strategy as described, e.g., in~\cite[Chapter~5]{Ainsworth2000}.
First, let us briefly outline the main ingredients of this strategy emphasizing the specific features pertaining
to tensor-product approximations.
The starting point is the following equation for the discretization error
$e \,{\coloneqq}\, u - u_{X\mathcal{P}} \,{\in}\, V$:
\begin{align} \label{weak_error}
B(e, v) = F(v) - B(u_{X\mathcal{P}}, v)  \quad \forall v \in V.
\end{align}
Since $e$ lives in the infinite-dimensional space $V$, we cannot calculate $e$ by using (\ref{weak_error}) directly.
However, one can approximate the error $e$ in  a finite-dimensional subspace $V_{X\mathcal{P}}^* \subset V$
in a similar way as the solution $u$ is approximated in the finite-dimensional subspace $V_{X\mathcal{P}} \subset V$.
Specifically, we introduce the error estimator $e^* \in V_{X\mathcal{P}}^*$ that~satisfies
\begin{align} \label{firstestimator}
     B(e^*, v) = F(v) - B(u_{X\mathcal{P}}, v) \quad \forall v \in V_{X\mathcal{P}}^*.
\end{align}
Note that, due to Galerkin orthogonality
\begin{align}\label{gal_orth}
B(e, v) = F(v) - B(u_{X\mathcal{P}}, v) = 0 \quad \forall v \in V_{X\mathcal{P}},
\end{align}
a meaningful approximation of $e$ is obtained by requiring that $V_{X\mathcal{P}} \subsetneqq V_{X\mathcal{P}} ^*$.

It is well known that the error estimator $e^*$ is linked to the 
enhanced Galerkin approximation $u_{X\mathcal{P}}^* \in V_{X\mathcal{P}}^*$ as follows:
$e^* = u_{X\mathcal{P}}^* - u_{X\mathcal{P}}$.
Here, $u_{X\mathcal{P}}^* \in V_{X\mathcal{P}}^*$ satisfies
\begin{align}
\label{dwfenhanced}
B(u_{X\mathcal{P}}^*, v) = F(v) \quad \forall v \in V_{X\mathcal{P}}^*.
\end{align}
Furthermore, since $B(\cdot,\cdot)$ is symmetric,
we deduce from \eqref{wf}, \eqref{dwf}, \eqref{dwfenhanced} that
\begin{align} \label{err:norms:aux1}
   \norm{e}_B^2 = \norm{u - u_{X\mathcal{P}}}_B^2 = F(u) - F(u_{X\mathcal{P}}), \qquad
   \norm{u - u_{X\mathcal{P}}^*}_B^2 = F(u) - F(u_{X\mathcal{P}}^*)
\end{align}
and
\begin{align*} 
   \norm{e^* }_B^2 = \norm{u_{X\mathcal{P}}^* - u_{X\mathcal{P}}}_B^2 =
                                 F(u_{X\mathcal{P}}^*) - F(u_{X\mathcal{P}}) \stackrel{\eqref{err:norms:aux1}}{=}
                                 \norm{e}_B^2 - \norm{u - u_{X\mathcal{P}}^*}_B^2.
\end{align*}
This implies that:
(i) $\norm{e^*}_B \leq \norm{e}_B$;
(ii) the quantity $\norm{e^* }_B$ is
the energy error reduction achieved by using the enriched space $V_{X\mathcal{P}}^*$;
and
(iii) $\|u - u_{X \mathcal{P}}^*\|_B \leq \|u - u_{X \mathcal{P}}\|_B$.
In order to establish the equivalence between the true energy error $\norm{e}_B$
and the energy error estimate $\norm{e^*}_B$,
the following stronger property than the one given in (iii) is assumed
(this property is usually referred to as the saturation assumption):
there exists a constant $\beta \in [0, 1)$ such~that
\begin{align}
\label{saturation}
\norm{u - u_{X \mathcal{P}}^*}_B \leq \beta \norm{u - u_{X \mathcal{P}}}_B.
\end{align}
Then the following inequalities hold (see, e.g., \cite[Theorem 5.1]{Ainsworth2000}):
\begin{align}
\label{eeStar}
   \norm{e^*}_B \leq \norm{e}_B \leq \frac{1}{\sqrt{1-\beta^2}}\norm {e^*}_B.
\end{align}

Motivated by high computational cost involved in computing the error estimator $e^*$ defined by (\ref{firstestimator})
(the cost that is comparable to computing the enhanced Galerkin approximation $u_{X\mathcal{P}}^*$),
hierarchical a posteriori error estimation techniques seek to approximate $e^*$ by
making use of the following two key ingredients:
(a) an alternative bilinear form $\widetilde{B}(\cdot, \cdot)$ in place of $B(\cdot, \cdot)$ on the left-hand side in (\ref{firstestimator})
with the aim to obtain an easier to invert (stiffness) matrix in the associated linear system;
(b) an appropriate decomposition of the enhanced finite-dimensional space $V_{X\mathcal{P}}^*$
with the aim to further reduce computational cost by solving (\ref{firstestimator}) on the subspace(s) of $V_{X\mathcal{P}}^*$.

The alternative bilinear form $\widetilde{B}(\cdot, \cdot)$ is employed to define
the modified error estimator $\tilde{e} \in V_{X\mathcal{P}}^*$ satisfying
\begin{align} \label{secondestimator}
\widetilde{B}(\tilde{e}, v) = F(v) - B(u_{X\mathcal{P}}, v) \quad \forall v \in V_{X\mathcal{P}}^*.
\end{align}
For problem \eqref{secondestimator} to be well-posed, the auxiliary bilinear form $\widetilde{B}(\cdot, \cdot)$
is assumed to be \fx{symmetric, continuous, and elliptic}.
In this case, $\widetilde{B}$ defines an inner product in $V$ which induces the norm
$\norm{v}_{\widetilde{B}} \coloneqq \widetilde{B}(v, v)^{1/2}$ that is equivalent to $\norm{v}_B$, i.e.,
there exist two positive constants $\lambda$ and $\Lambda$ such that
\begin{align}
\label{normequiv}
\lambda \norm{v}_B \leq \norm{v}_{\widetilde{B}} \leq \Lambda \norm{v}_B \quad \forall v \in V.
\end{align}
This leads to the following relation between the error estimators $e^*$ and $\tilde{e}$
(see, e.g., \cite[Theorem 5.3]{Ainsworth2000}):
\begin{align}
\label{eSeT}
\lambda \norm{\tilde{e}}_{\widetilde{B}} \leq \norm{e^*}_B \leq \Lambda \norm{\tilde{e}}_{\widetilde{B}}.
\end{align}

The discussion of the second ingredient of the hierarchical error estimation strategy is linked
to the specific choice of the enriched subspace $V_{X\mathcal{P}} ^* \subset V$.
In the context of tensor-product approximations, an appropriate choice of $V_{X\mathcal{P}} ^*$ is important,
as this affects the quality of the final error estimate as well as the computational cost associated
with computing that estimate, cf.~\cite{2014-Bespalov-vol36,2016-Bespalov-vol38}.
In this paper, we follow the idea proposed in~\cite{2016-Bespalov-vol38}.
Firstly, we construct an enriched finite element subspace $X^* \subset H_0^1(D)$, which has a direct sum decomposition
$X^* \coloneqq X \oplus Y$,
where the finite-dimensional subspace $Y\subset H_0^1(D)$ is called the \emph{detail finite element space}.
Secondly, we construct an enriched polynomial space
$ P_{\mathcal{P}^*} \coloneqq \spn\{p_{\bm{\alpha}};\; \bm{\alpha} \in {\mathcal{P}^*}\}$ associated with
a finite index set $\mathcal{P}^* \coloneqq \mathcal{P} \cup \mathcal{Q}$ for some
$\mathcal{Q} \subset \fkJ$ such that $\mathcal{P} \cap \mathcal{Q} = \emptyset$.
The set $\mathcal{Q}$ is called the \emph{detail index set} and the corresponding polynomial space
$ P_{\mathcal{Q}} \coloneqq \spn\{p_{\bm{\alpha}};\; \bm{\alpha} \in {\mathcal{Q}}\}$ is called the \emph{detail polynomial space}.
Note that $ P_{\mathcal{P}^*}$ has an orthogonal direct sum decomposition
with respect to the inner product $\langle\cdot, \cdot\rangle_{\pi}$ as follows:
\begin{align*}
    P_{\mathcal{P}^*} =  P_{\mathcal{P}} \oplus  P_{\mathcal{Q}}.
\end{align*}
Finally, the enriched  finite-dimensional space $V_{X\mathcal{P}}^*$ is defined as the following direct~sum:
\begin{align*} 
   V_{X\mathcal{P}}^* \coloneqq V_{X\mathcal{P}} \oplus V_{Y\mathcal{P}} \oplus V_{X\mathcal{Q}},
\end{align*}
where $V_{Y\mathcal{P}} \coloneqq Y \otimes  P_\mathcal{P}$ and
$V_{X\mathcal{Q}} \coloneqq X \otimes  P_\mathcal{Q}$.

The direct sum structure of $V_{X\mathcal{P}}^*$ motivates
the definition of two error estimators $e_{Y\mathcal{P}} \in V_{Y\mathcal{P}}$ and
$e_{X\mathcal{Q}} \in V_{X\mathcal{Q}}$ satisfying
\begin{align}
\label{eYPestimator}
\widetilde{B}(e_{Y\mathcal{P}}, v) &= F(v) - B(u_{X\mathcal{P}}, v) && \forall v \in V_{Y\mathcal{P}}, \\
\label{eXQestimator}
\widetilde{B}(e_{X\mathcal{Q}}, v) &= F(v) - B(u_{X\mathcal{P}}, v) && \forall v \in V_{X\mathcal{Q}}.
\end{align}

Combining all ingredients,
we define the following error estimate
\begin{align}
\label{estimate}
\eta \coloneqq \sqrt{\norm{e_{Y \mathcal{P}}}_{\widetilde{B}}^2  + \norm{e_{X \mathcal{Q}}}_{\widetilde{B}}^2}.
\end{align}

Clearly, making the right choice of the auxiliary bilinear form $\widetilde{B}(\cdot,\cdot)$ is important in the above construction.
In particular, if this choice implies $\widetilde{B}$-orthogonality of the subspace decomposition,
the following abstract result holds.

\begin{lemma} \label{lem1}
Let $\widetilde{B}(\cdot,\cdot)$ be a symmetric bilinear form that is continuous and elliptic on a Hilbert space $V$,
and let $G(\cdot)$ be a continuous linear functional on $V$.
Consider three subspaces $V_1,\, V_2,\, V_3 \subset V$ such that $V_3 = V_1 \oplus V_2$.
Let $e_i \in V_i$ ($i = 1,\,2,\,3$) satisfy
\begin{align} \label{eq:ei}
   \widetilde{B}(e_i, v) = G(v)\qquad \forall v \in V_i.
\end{align}
If the direct sum decomposition $V_3 = V_1 \oplus V_2$ is $\widetilde{B}$-orthogonal, i.e.,
\begin{align} \label{orgen}
   \widetilde{B} (u, v) = 0 \qquad \forall u \in V_1, ~~ \forall v \in V_2,
\end{align}
then
\begin{align} \label{eq:lem1}
   e_3 = e_1 + e_2 \quad \hbox{and}\quad \widetilde{B} (e_3, e_3) = \widetilde{B} (e_1, e_1) + \widetilde{B} (e_2, e_2).
\end{align}
\end{lemma}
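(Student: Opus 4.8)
The plan is to prove the two assertions in \eqref{eq:lem1} in order: first the identity $e_3 = e_1 + e_2$, then the Pythagoras-type relation for $\widetilde{B}$. Since $\widetilde{B}$ is symmetric, continuous, and elliptic on $V$, each of the three problems \eqref{eq:ei} is uniquely solvable by Lax--Milgram (or simply Riesz representation, $\widetilde{B}$ being an inner product), so $e_1, e_2, e_3$ are well defined, and it suffices to verify that $e_1 + e_2$ satisfies the defining variational problem for $e_3$.

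First I would show $e_3 = e_1 + e_2$. Take an arbitrary $v \in V_3$. By the direct sum decomposition $V_3 = V_1 \oplus V_2$, write $v = v_1 + v_2$ with $v_1 \in V_1$, $v_2 \in V_2$. Then
\begin{align*}
   \widetilde{B}(e_1 + e_2, v) &= \widetilde{B}(e_1, v_1) + \widetilde{B}(e_1, v_2) + \widetilde{B}(e_2, v_1) + \widetilde{B}(e_2, v_2).
\end{align*}
Here $\widetilde{B}(e_1, v_2) = 0$ because $e_1 \in V_1$, $v_2 \in V_2$ and the decomposition is $\widetilde{B}$-orthogonal \eqref{orgen}; likewise $\widetilde{B}(e_2, v_1) = 0$. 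The remaining two terms equal $G(v_1)$ and $G(v_2)$ by \eqref{eq:ei} applied in $V_1$ and $V_2$ respectively, so by linearity of $G$ the sum is $G(v_1 + v_2) = G(v)$. Thus $e_1 + e_2 \in V_3$ solves \eqref{eq:ei} for $i = 3$, and by uniqueness $e_3 = e_1 + e_2$.

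For the second identity I would simply expand $\widetilde{B}(e_3, e_3) = \widetilde{B}(e_1 + e_2, e_1 + e_2)$ using bilinearity, obtaining $\widetilde{B}(e_1,e_1) + 2\widetilde{B}(e_1,e_2) + \widetilde{B}(e_2,e_2)$ (using symmetry to combine the cross terms), and then invoke \eqref{orgen} once more with $u = e_1 \in V_1$, $v = e_2 \in V_2$ to kill the cross term. This yields $\widetilde{B}(e_3,e_3) = \widetilde{B}(e_1,e_1) + \widetilde{B}(e_2,e_2)$, completing the proof.

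There is no real obstacle here; the only point requiring a moment's care is to make sure the $\widetilde{B}$-orthogonality \eqref{orgen} is applied in the correct direction (it is symmetric, so both $\widetilde{B}(e_1,v_2)$ and $\widetilde{B}(e_2,v_1)$ vanish) and that well-posedness of \eqref{eq:ei} on each $V_i$ — hence uniqueness of $e_3$ — is explicitly noted so that the identification $e_3 = e_1 + e_2$ is justified rather than merely plausible.
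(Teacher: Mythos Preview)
Your proof is correct and follows essentially the same route as the paper: decompose an arbitrary $v\in V_3$ as $v_1+v_2$, use the $\widetilde{B}$-orthogonality \eqref{orgen} to kill the cross terms, invoke \eqref{eq:ei} on $V_1$ and $V_2$, and conclude by uniqueness; the Pythagoras identity then follows by expanding $\widetilde{B}(e_1+e_2,e_1+e_2)$. If anything, you are slightly more explicit than the paper in citing Lax--Milgram/Riesz for well-posedness and in noting that symmetry of $\widetilde{B}$ is what lets you combine and kill both cross terms.
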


\begin{proof}
Since $V_3 = V_1 \oplus V_2$, every $v \in V_3$ has a unique decomposition
$v = v_1 + v_2\text{ with }  v_1 \in V_1, ~ v_2\in V_2$.
Using the orthogonality relation (\ref{orgen}), we deduce from (\ref{eq:ei}) (with $i = 1,\,2$) that
\begin{align*}
   \widetilde{B}(e_1 + e_2,v)
   &= \widetilde{B}(e_1 + e_2, v_1 + v_2) = \widetilde{B}(e_1, v_1) + \widetilde{B}(e_1, v_2) + \widetilde{B}(e_2, v_1) + \widetilde{B}(e_2, v_2)
   \\
   &= \widetilde{B}(e_1, v_1) + \widetilde{B}(e_2, v_2) = G(v_1) + G(v_2) = G(v)\quad \forall v \in V_3.
\end{align*}
This implies that $e_3=  e_1 + e_2$,
because $e_3$ is the unique solution of (\ref{eq:ei}) with $i=3$.
The second equality in~\eqref{eq:lem1} then follows due to the orthogonality property~\eqref{orgen}
and the symmetry of the bilinear form $\widetilde{B}(\cdot,\cdot)$.
\end{proof}

A direct application of Lemma~\ref{lem1} to the subspace
$V_{X\mathcal{Q}} = \oplus_{\bm{\mu}\in \mathcal{Q}} X \otimes  P_{\{\bm{\mu}\}}$
gives the following result on the decomposition of the error estimator $e_{X\mathcal{Q}}$ defined by~(\ref{eXQestimator}).

\begin{corollary} \label{col1}
Assume that the direct sum decomposition
$V_{X\mathcal{Q}} = \oplus_{\bm{\mu}\in \mathcal{Q}} X \otimes  P_{\{\bm{\mu}\}}$
is $\widetilde{B}$-orthogonal, i.e.,
for any $\bm{\mu},\,\bm{\nu} \in \mathcal{Q}$ \rev{($\bm{\mu} \neq \bm{\nu}$)} there holds
\begin{align} \label{orXQ}
   \widetilde{B}(u, v) = 0 \qquad \forall u \in X\otimes  P_{\{\bm{\mu}\}}, ~~ \forall v \in X\otimes  P_{\{\bm{\nu}\}}.
\end{align}
Then the error estimator $e_{X \mathcal{Q}}$ defined by~\eqref{eXQestimator}
and its norm $\norm{e_{X \mathcal{Q}}}_{\widetilde{B}}$ can be decomposed into the
contributions associated with individual indices $\bm{\mu} \in \mathcal{Q}$ as follows:
\begin{align} \label{eXQdecomp}
   e_{X \mathcal{Q}} = \sum_{\bm{\mu} \in \mathcal{Q}} e^{(\bm{\mu})}_{X \mathcal{Q}},\qquad
   \norm{e_{X \mathcal{Q}}}_{\widetilde{B}}^2 = \sum_{\bm{\mu} \in \mathcal{Q}} \norm{e^{(\bm{\mu})}_{X \mathcal{Q}}}_{\widetilde{B}}^2.
\end{align}
Here, for each index $\bm{\mu} \in \mathcal{Q}$,
the estimator $e^{(\bm{\mu})}_{X \mathcal{Q}} \in X \otimes  P_{\{\bm{\mu}\}}$ satisfies
\begin{align} \label{eXQestimatori}
   \widetilde{B}\Big(e^{(\bm{\mu})}_{X \mathcal{Q}}, v\Big) = F(v) - B(u_{X\mathcal{P}}, v)\qquad
   \forall v \in X \otimes  P_{\{\bm{\mu}\}}.
\end{align}
\end{corollary}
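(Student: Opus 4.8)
The plan is to obtain Corollary~\ref{col1} as a direct consequence of Lemma~\ref{lem1} by induction on the cardinality of the detail index set $\mathfrak{Q}$. I set $G(\cdot) \coloneqq F(\cdot) - B(u_{X\mathfrak{P}}, \cdot)$, which is a continuous linear functional on $V$ by continuity of $B(\cdot,\cdot)$, boundedness of $u_{X\mathfrak{P}}$, and continuity of $F(\cdot)$. With this notation, both \eqref{eXQestimator} and \eqref{eXQestimatori} are instances of \eqref{eq:ei}, so the task reduces to establishing the decomposition $V_{X\mathfrak{Q}} = \bigoplus_{\bm{\mu}\in\mathfrak{Q}} X \otimes \mathcal{P}_{\bm{\mu}}$ together with its $\widetilde{B}$-orthogonality, and then iterating Lemma~\ref{lem1}.

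First I would record the algebraic direct sum: since $\mathcal{P}_{\mathfrak{Q}} = \bigoplus_{\bm{\mu}\in\mathfrak{Q}} \mathcal{P}_{\bm{\mu}}$ (this is the $\langle\cdot,\cdot\rangle_\pi$-orthogonal decomposition already noted in the excerpt, the $\mathcal{P}_{\bm{\mu}}$ being the one-dimensional spans of distinct orthonormal basis polynomials), tensoring with the fixed space $X$ preserves the direct sum, giving $V_{X\mathfrak{Q}} = X \otimes \mathcal{P}_{\mathfrak{Q}} = \bigoplus_{\bm{\mu}\in\mathfrak{Q}} X \otimes \mathcal{P}_{\bm{\mu}}$. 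The hypothesis \eqref{orXQ} is precisely the $\widetilde{B}$-orthogonality of the summands.

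Next I would run the induction. Enumerate $\mathfrak{Q} = \{\bm{\mu}_1, \dots, \bm{\mu}_k\}$ and set $W_j \coloneqq \bigoplus_{i=1}^{j} X \otimes \mathcal{P}_{\bm{\mu}_i}$, so $W_1 = X\otimes\mathcal{P}_{\bm{\mu}_1}$ and $W_k = V_{X\mathfrak{Q}}$. For each $j \ge 2$ we have $W_j = W_{j-1} \oplus (X\otimes\mathcal{P}_{\bm{\mu}_j})$, and this decomposition is $\widetilde{B}$-orthogonal: any element of $W_{j-1}$ is a sum of elements of the $X\otimes\mathcal{P}_{\bm{\mu}_i}$ for $i<j$, each of which is $\widetilde{B}$-orthogonal to $X\otimes\mathcal{P}_{\bm{\mu}_j}$ by \eqref{orXQ}, so bilinearity of $\widetilde{B}$ finishes it. Applying Lemma~\ref{lem1} with $V_1 = W_{j-1}$, $V_2 = X\otimes\mathcal{P}_{\bm{\mu}_j}$, $V_3 = W_j$, and the functional $G$, and writing $f_j$ for the solution of \eqref{eq:ei} on $W_j$, yields $f_j = f_{j-1} + e^{(\bm{\mu}_j)}_{X\mathfrak{Q}}$ and $\widetilde{B}(f_j,f_j) = \widetilde{B}(f_{j-1},f_{j-1}) + \widetilde{B}(e^{(\bm{\mu}_j)}_{X\mathfrak{Q}}, e^{(\bm{\mu}_j)}_{X\mathfrak{Q}})$. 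Unwinding from $j=k$ down to $j=1$ and identifying $f_k = e_{X\mathfrak{Q}}$ gives both identities in \eqref{eXQdecomp}. One should also note the telescoped identity is independent of the chosen enumeration, which is automatic since $e_{X\mathfrak{Q}}$ and each $e^{(\bm{\mu})}_{X\mathfrak{Q}}$ are defined intrinsically by their Galerkin problems.

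There is no real obstacle here; the only point needing a line of care is that iterating Lemma~\ref{lem1} requires re-verifying at each step that the \emph{accumulated} subspace $W_{j-1}$ is $\widetilde{B}$-orthogonal to the next summand, rather than merely that the original summands are pairwise orthogonal — but this follows immediately from bilinearity, so the induction goes through cleanly. An alternative, equally short write-up avoids the induction entirely: expand $e_{X\mathfrak{Q}} = \sum_{\bm{\mu}\in\mathfrak{Q}} c_{\bm{\mu}}$ along the direct sum, test \eqref{eXQestimator} against $v \in X\otimes\mathcal{P}_{\bm{\nu}}$, use \eqref{orXQ} to kill all cross terms so that $\widetilde{B}(c_{\bm{\nu}}, v) = G(v)$ for all such $v$, and invoke uniqueness to conclude $c_{\bm{\nu}} = e^{(\bm{\nu})}_{X\mathfrak{Q}}$; the Pythagorean identity for the norms then drops out of \eqref{orXQ} and symmetry. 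I would present whichever of these is shorter, most likely the second.
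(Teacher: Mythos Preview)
Your proposal is correct and matches the paper's approach: the paper gives no detailed proof, stating only that Corollary~\ref{col1} is ``a direct application of Lemma~\ref{lem1} to the subspace $V_{X\mathfrak{Q}} = \oplus_{\bm{\mu}\in \mathfrak{Q}} X \otimes \mathcal{P}_{\bm{\mu}}$'', and your induction (or the alternative direct expansion) is exactly how one spells out that application when Lemma~\ref{lem1} is phrased for two summands.
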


The next step is to connect the error estimates $\norm{\tilde{e}}_{\widetilde{B}}$ and $\eta$.
To this end, we employ two strengthened Cauchy--Schwarz inequalities \rev{(see, e.g.,~\cite{Ainsworth2000,eijkhoutVass1991})}:
there exist two constants $\kappa_1,\, \kappa_2 \in [0, 1)$ such~that
\begin{align}
\label{sCS1}
|\widetilde{B}(u, v)| &\leq \kappa_1 \norm{u}_{\widetilde{B}} \norm{v}_{\widetilde{B}} && \forall u \in V_{X^*\mathcal{P}}\coloneqq V_{X\mathcal{P}} \oplus V_{Y\mathcal{P}}, ~ \forall v \in V_{X\mathcal{Q}}, \\
\label{sCS2}
|\widetilde{B}(u, v)| &\leq \kappa_2 \norm{u}_{\widetilde{B}} \norm{v}_{\widetilde{B}} && \forall u \in V_{X\mathcal{P}}, ~ \forall v \in V_{Y\mathcal{P}}.
\end{align}

\begin{lemma} \label{the:first}
Let $\norm{\tilde{e}}_{\widetilde{B}}$ and $\eta$ be defined in \eqref{secondestimator} and~\eqref{estimate}, respectively.
Then the following inequalities hold
\begin{align} \label{errest}
   \frac{1 }{\sqrt{2}}\eta \leq \norm{\tilde{e}}_{\widetilde{B}} \leq \frac{1 }{\sqrt{(1-\kappa_1)(1-\kappa_2^2)}}\eta,
\end{align}
Furthermore, if $\kappa_1 = 0$ in \eqref{sCS1}
(that is, $V_{X^*\mathcal{P}}$ and $V_{X\mathcal{Q}}$ are ${\widetilde{B}}$-orthogonal), then
\begin{align} \label{errest2}
   \eta  \leq \norm{\tilde{e}}_{\widetilde{B}} \leq \frac{1 }{\sqrt{1-\kappa_2^2}}\eta.
\end{align} 
\end{lemma}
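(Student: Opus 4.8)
The plan is to prove the two-sided bound \eqref{errest} by decomposing the test space $V_{X\mathfrak{P}}^* = V_{X^*\mathfrak{P}} \oplus V_{X\mathfrak{Q}}$ (with $V_{X^*\mathfrak{P}} = V_{X\mathfrak{P}} \oplus V_{Y\mathfrak{P}}$) and tracking how $\tilde e$ from \eqref{secondestimator} relates to the block estimators. Writing $G(v) \coloneqq F(v) - B(u_{X\mathfrak{P}},v)$, the natural intermediate objects are: the estimator $e_{X^*\mathfrak{P}} \in V_{X^*\mathfrak{P}}$ solving $\widetilde B(e_{X^*\mathfrak{P}},v) = G(v)$ for all $v \in V_{X^*\mathfrak{P}}$, together with $e_{Y\mathfrak{P}}$ and $e_{X\mathfrak{Q}}$ from \eqref{eYPestimator}--\eqref{eXQestimator}. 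Note $\eta^2 = \norm{e_{Y\mathfrak{P}}}_{\widetilde B}^2 + \norm{e_{X\mathfrak{Q}}}_{\widetilde B}^2$.

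For the upper bound on $\norm{\tilde e}_{\widetilde B}$, the first step is to control $\norm{\tilde e}_{\widetilde B}$ by $\norm{e_{X^*\mathfrak{P}}}_{\widetilde B}$ and $\norm{e_{X\mathfrak{Q}}}_{\widetilde B}$ using the strengthened Cauchy--Schwarz inequality \eqref{sCS1}: this is the standard two-subspace hierarchical argument (as in \cite[Theorem~5.3]{Ainsworth2000} or the proof strategy of \cite{2016-Bespalov-vol38}), giving something like $\norm{\tilde e}_{\widetilde B}^2 \le \frac{1}{1-\gamma_1}(\norm{e_{X^*\mathfrak{P}}}_{\widetilde B}^2 + \norm{e_{X\mathfrak{Q}}}_{\widetilde B}^2)$. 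The second step is to bound $\norm{e_{X^*\mathfrak{P}}}_{\widetilde B}$ in terms of $\norm{e_{Y\mathfrak{P}}}_{\widetilde B}$ alone: since $V_{X^*\mathfrak{P}} = V_{X\mathfrak{P}} \oplus V_{Y\mathfrak{P}}$ and $G$ vanishes on $V_{X\mathfrak{P}}$ (because $u_{X\mathfrak{P}}$ is the Galerkin solution \eqref{dwf}, and $B = \widetilde B$? — no, $G$ involves $B$, not $\widetilde B$; but $G(v)=0$ for $v \in V_{X\mathfrak{P}}$ by \eqref{gal_orth}), one applies the strengthened Cauchy--Schwarz \eqref{sCS2} to get $\norm{e_{X^*\mathfrak{P}}}_{\widetilde B} \le \frac{1}{\sqrt{1-\gamma_2^2}}\norm{e_{Y\mathfrak{P}}}_{\widetilde B}$; the key point here is that the $V_{X\mathfrak{P}}$-component of $e_{X^*\mathfrak{P}}$ is driven only through the coupling term, so a Pythagoras-type estimate using \eqref{sCS2} applies. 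Combining the two steps yields the right inequality in \eqref{errest}.

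For the lower bound $\eta/\sqrt{2} \le \norm{\tilde e}_{\widetilde B}$, I would use the reverse direction: each block estimator is a constrained problem, so $\norm{e_{Y\mathfrak{P}}}_{\widetilde B} = \sup_{v \in V_{Y\mathfrak{P}}} G(v)/\norm{v}_{\widetilde B} = \sup_{v \in V_{Y\mathfrak{P}}} \widetilde B(\tilde e,v)/\norm{v}_{\widetilde B} \le \norm{\tilde e}_{\widetilde B}$ by Cauchy--Schwarz, and similarly $\norm{e_{X\mathfrak{Q}}}_{\widetilde B} \le \norm{\tilde e}_{\widetilde B}$; hence $\eta^2 \le 2\norm{\tilde e}_{\widetilde B}^2$. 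For \eqref{errest2}, when $\gamma_1 = 0$ the decomposition $V_{X\mathfrak{P}}^* = V_{X^*\mathfrak{P}} \oplus V_{X\mathfrak{Q}}$ is $\widetilde B$-orthogonal, so Lemma~\ref{lem1} gives $\tilde e = e_{X^*\mathfrak{P}} + e_{X\mathfrak{Q}}$ with $\norm{\tilde e}_{\widetilde B}^2 = \norm{e_{X^*\mathfrak{P}}}_{\widetilde B}^2 + \norm{e_{X\mathfrak{Q}}}_{\widetilde B}^2$; combined with $\norm{e_{Y\mathfrak{P}}}_{\widetilde B} \le \norm{e_{X^*\mathfrak{P}}}_{\widetilde B} \le \frac{1}{\sqrt{1-\gamma_2^2}}\norm{e_{Y\mathfrak{P}}}_{\widetilde B}$ this pins down \eqref{errest2}, the lower bound $\eta \le \norm{\tilde e}_{\widetilde B}$ being immediate and the upper bound following from the $\gamma_2$-estimate on the $V_{X^*\mathfrak{P}}$ block.

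The main obstacle I anticipate is the careful bookkeeping in the two-subspace strengthened Cauchy--Schwarz estimate — in particular verifying that $\tilde e - e_{X^*\mathfrak{P}} - e_{X\mathfrak{Q}}$ (or the appropriate residual combination) can be controlled cleanly, and correctly exploiting that $G$ annihilates $V_{X\mathfrak{P}}$ so that the coupling constant entering the $V_{X^*\mathfrak{P}}$ sub-estimate is $\gamma_2$ rather than a cruder constant. This is essentially the argument of \cite[Chapter~5]{Ainsworth2000} adapted to a three-way tensor-product split, so the structure is known; the work is in getting the constants $\frac{1}{\sqrt{(1-\gamma_1)(1-\gamma_2^2)}}$ and $\frac{1}{\sqrt{2}}$ exactly as stated.
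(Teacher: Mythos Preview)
Your proposal is correct and follows essentially the same route as the paper's proof: introduce the auxiliary estimator $e_{X^*\mathfrak{P}}$, use the strengthened Cauchy--Schwarz inequality~\eqref{sCS1} together with the decomposition $\tilde e = w_{X^*\mathfrak{P}} + w_{X\mathfrak{Q}}$ to obtain $\norm{\tilde e}_{\widetilde B}^2 \le (1-\gamma_1)^{-1}\big(\norm{e_{X^*\mathfrak{P}}}_{\widetilde B}^2 + \norm{e_{X\mathfrak{Q}}}_{\widetilde B}^2\big)$, then use Galerkin orthogonality~\eqref{gal_orth} and~\eqref{sCS2} on the split $V_{X^*\mathfrak{P}} = V_{X\mathfrak{P}} \oplus V_{Y\mathfrak{P}}$ to get $\norm{e_{X^*\mathfrak{P}}}_{\widetilde B}^2 \le (1-\gamma_2^2)^{-1}\norm{e_{Y\mathfrak{P}}}_{\widetilde B}^2$; the lower bound and the $\gamma_1=0$ case via Lemma~\ref{lem1} are handled exactly as in the paper. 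The only cosmetic difference is that for the lower bound you go directly from $V_{Y\mathfrak{P}} \subset V_{X\mathfrak{P}}^*$ to $\norm{e_{Y\mathfrak{P}}}_{\widetilde B} \le \norm{\tilde e}_{\widetilde B}$, whereas the paper routes this through $\norm{e_{Y\mathfrak{P}}}_{\widetilde B} \le \norm{e_{X^*\mathfrak{P}}}_{\widetilde B} \le \norm{\tilde e}_{\widetilde B}$.
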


\begin{proof}
We start by defining an auxiliary error estimator $e_{X^*\mathcal{P}} \in V_{X^*\mathcal{P}}$ satisfying
\begin{align} \label{eXStarPestimator}
   \widetilde{B}(e_{X^*\mathcal{P}}, v) = F(v) - B(u_{X\mathcal{P}}, v) \quad \forall v \in V_{X^*\mathcal{P}}.
\end{align}
The proof then consists of four steps.

{\bf Step 1.}
In this step, we will establish the following inequalities:
\begin{align} \label{errlevel1}
   \frac{\norm{e_{X^* \mathcal{P}}}_{\widetilde{B}}^2  + \norm{e_{X \mathcal{Q}}}_{\widetilde{B}}^2}{2} \leq
   \norm{\tilde{e}}_{\widetilde{B}}^2 \leq
   \frac{\norm{e_{X^* \mathcal{P}}}_{\widetilde{B}}^2  + \norm{e_{X \mathcal{Q}}}_{\widetilde{B}}^2}{1-\kappa_1}.
\end{align}
Since $V_{X^*\mathcal{P}}$ and $V_{X\mathcal{Q}}$ are subspaces of $V_{X\mathcal{P}}^*$,
we use~(\ref{secondestimator}), (\ref{eXStarPestimator}), (\ref{eXQestimator}) and apply the Cauchy--Schwarz inequality to obtain
\begin{align*}
\norm{e_{X^* \mathcal{P}}}_{\widetilde{B}}^2 &= \widetilde{B}(e_{X^* \mathcal{P}}, e_{X^* \mathcal{P}})  = \widetilde{B}(\tilde{e}, e_{X^* \mathcal{P}}) \leq \norm{\tilde{e}}_{\widetilde{B}}\norm{e_{X^* \mathcal{P}}}_{\widetilde{B}}, \\
\norm{e_{X \mathcal{Q}}}_{\widetilde{B}}^2 &= \widetilde{B}(e_{X \mathcal{Q}}, e_{X \mathcal{Q}})  = \widetilde{B}(\tilde{e}, e_{X \mathcal{Q}}) \leq  \norm{\tilde{e}}_{\widetilde{B}}\norm{e_{X \mathcal{Q}}}_{\widetilde{B}}.
\end{align*}
Hence, the left-hand inequality in (\ref{errlevel1}) follows.

Let us now prove the right-hand inequality in (\ref{errlevel1}).
Since $V_{X \mathcal{P}}^* = V_{X^* \mathcal{P}} \oplus V_{X \mathcal{Q}}$, the estimator $\tilde{e} \in V_{X\mathcal{P}}^*$ has a unique decomposition
\begin{align*}
   \tilde{e} = w_{X^* \mathcal{P}} + w_{X \mathcal{Q}}\quad
   \text{with}~~ w_{X^* \mathcal{P}}  \in V_{X^* \mathcal{P}}, ~~ w_{X \mathcal{Q}} \in V_{X \mathcal{Q}}.
\end{align*}
Using this representation of $\tilde{e}$, we deduce that
\begin{align} \label{eStarineq1}
   \norm{\tilde{e}}_{\widetilde{B}}^2 &=\widetilde{B}(\tilde{e},\tilde{e}) =
   \widetilde{B}(\tilde{e},w_{X^* \mathcal{P}} + w_{X \mathcal{Q}}) =
   \widetilde{B}(\tilde{e},w_{X^* \mathcal{P}}) + \widetilde{B}(\tilde{e}, w_{X \mathcal{Q}}) \nonumber \\
   & =
   \widetilde{B}(e_{X^* \mathcal{P}},w_{X^* \mathcal{P}}) + \widetilde{B}(e_{X \mathcal{Q}}, w_{X \mathcal{Q}}) \leq
   \norm{e_{X^* \mathcal{P}}}_{\widetilde{B}}\norm{w_{X^* \mathcal{P}}}_{\widetilde{B}} +
   \norm{e_{X \mathcal{Q}}}_{\widetilde{B}} \norm{w_{X \mathcal{Q}}}_{\widetilde{B}}\nonumber \\
   & \leq
   \left(\norm{e_{X^* \mathcal{P}}}_{\widetilde{B}}^2  + \norm{e_{X \mathcal{Q}}}_{\widetilde{B}}^2 \right)^{1/2}
   \left(\norm{w_{X^* \mathcal{P}}}_{\widetilde{B}}^2 + \norm{w_{X \mathcal{Q}}}_{\widetilde{B}}^2\right)^{1/2},
\end{align}
where the fourth equality is due to (\ref{secondestimator}), (\ref{eXStarPestimator}) and (\ref{eXQestimator}),
the first inequality is due to the Cauchy--Schwarz inequality, and
the second inequality is due to the algebraic inequality $ab + cd \leq (a^2+c^2)^{1/2}(b^2+d^2)^{1/2}$.

On the other hand, we can estimate $\norm{\tilde{e}}_{\widetilde{B}}^2$ from below as follows:
\begin{align} \label{eStarineq2}
   \norm{\tilde{e}}_{\widetilde{B}}^2 &= \widetilde{B}(\tilde{e},\tilde{e}) =
   \widetilde{B}(w_{X^* \mathcal{P}} + w_{X \mathcal{Q}},w_{X^* \mathcal{P}} + w_{X \mathcal{Q}}) \nonumber \\
   & =
   \widetilde{B}(w_{X^* \mathcal{P}} ,w_{X^* \mathcal{P}} ) +
   2\widetilde{B}(w_{X^* \mathcal{P}} , w_{X \mathcal{Q}}) + \widetilde{B}(w_{X \mathcal{Q}}, w_{X \mathcal{Q}}) \nonumber \\
   & \!\!\!\stackrel{\eqref{sCS1}}{\geq}
   \norm{w_{X^* \mathcal{P}}}_{\widetilde{B}}^2 - 2 \kappa_1 
   \norm{w_{X^* \mathcal{P}}}_{\widetilde{B}} \norm{w_{X \mathcal{Q}}}_{\widetilde{B}}+ \norm{w_{X \mathcal{Q}}}_{\widetilde{B}}^2 \nonumber \\
   & \geq
   \norm{w_{X^* \mathcal{P}}}_{\widetilde{B}}^2 -\kappa_1 
   \left(\norm{w_{X^* \mathcal{P}}}_{\widetilde{B}}^2 + \norm{w_{X \mathcal{Q}}}_{\widetilde{B}}^2\right) +
   \norm{w_{X \mathcal{Q}}}_{\widetilde{B}}^2 \nonumber \\
   & =
   (1-\kappa_1)\left(\norm{w_{X^* \mathcal{P}}}_{\widetilde{B}}^2 + \norm{w_{X \mathcal{Q}}}_{\widetilde{B}}^2\right).
\end{align}
Combining (\ref{eStarineq1}) with (\ref{eStarineq2}) gives the right-hand inequality in (\ref{errlevel1}).

{\bf Step 2.}
In the second step, we will establish the following inequalities:
\begin{align} \label{errlevel2}
   \norm{e_{Y \mathcal{P}}}_{\widetilde{B}}^2  \leq
   \norm{e_{X^* \mathcal{P}}}_{\widetilde{B}}^2 \leq \frac{\norm{e_{Y \mathcal{P}}}_{\widetilde{B}}^2}{1-\kappa_2^2}.
\end{align}
Since $V_{Y\mathcal{P}} \subset V_{X^*\mathcal{P}}$, we use~(\ref{eXStarPestimator}), (\ref{eYPestimator})
and the Cauchy--Schwarz inequality to~obtain
\begin{align*}
   \norm{e_{Y \mathcal{P}}}_{\widetilde{B}}^2 = \widetilde{B}(e_{Y \mathcal{P}},e_{Y \mathcal{P}}) =
   \widetilde{B}(e_{X^* \mathcal{P}},e_{Y \mathcal{P}}) \leq \norm{e_{X^* \mathcal{P}}}_{\widetilde{B}}\norm{e_{Y \mathcal{P}}}_{\widetilde{B}}.
\end{align*}
Hence, the left-hand inequality in (\ref{errlevel2}) follows.

Using similar arguments as in Step 1, the proof for the right-hand inequality in (\ref{errlevel2})
first makes use of the decomposition
\begin{align*}
   V_{X^* \mathcal{P}} \ni e_{X^* \mathcal{P}} =
   w_{X \mathcal{P}} + w_{Y \mathcal{P}}\quad
   \text{with} ~~w_{X \mathcal{P}}  \in V_{X \mathcal{P}}, ~~w_{Y \mathcal{P}} \in V_{Y \mathcal{P}}
\end{align*}
and the Cauchy--Schwarz inequality to estimate
\begin{align} \label{eXStarPineq1}
   \norm{e_{X^* \mathcal{P}}}_{\widetilde{B}}^2
   &=
   \widetilde{B}(e_{X^* \mathcal{P}}, e_{X^* \mathcal{P}}) =
   \widetilde{B}(e_{X^* \mathcal{P}}, w_{X \mathcal{P}} + w_{Y \mathcal{P}}) \nonumber \\
   &=
   \widetilde{B}(e_{X^* \mathcal{P}}, w_{Y \mathcal{P}})  =  \widetilde{B}(e_{Y \mathcal{P}}, w_{Y \mathcal{P}}) \leq
   \norm{e_{Y \mathcal{P}}}_{\widetilde{B}}\norm{w_{Y \mathcal{P}}}_{\widetilde{B}};
\end{align}
here, the third equality is due to $\widetilde{B}(e_{X^* \mathcal{P}}, w_{X \mathcal{P}}) = 0$
as follows from (\ref{dwf}) and (\ref{eXStarPestimator}), and
the fourth equality is due to (\ref{eXStarPestimator}) and (\ref{eYPestimator}).
On the other hand, applying the strengthened Cauchy--Schwarz inequality (\ref{sCS2}) and
the algebraic inequality $2\kappa_2ab \leq a^2 + \kappa_2^2b^2$,
we obtain the lower bound for $\norm{e_{X^* \mathcal{P}}}_{\widetilde{B}}^2$:
\begin{align} \label{eXStarPineq2}
   \norm{e_{X^* \mathcal{P}}}_{\widetilde{B}}^2
   &=
   \widetilde{B}(e_{X^* \mathcal{P}}, e_{X^* \mathcal{P}}) =
   \widetilde{B}(w_{X\mathcal{P}} + w_{Y\mathcal{P}},w_{X\mathcal{P}} + w_{Y\mathcal{P}}) \nonumber \\
   & \geq
   \norm{w_{X \mathcal{P}}}_{\widetilde{B}}^2 - 2 \kappa_2  \norm{w_{X \mathcal{P}}}_{\widetilde{B}}
   \norm{w_{Y \mathcal{P}}}_{\widetilde{B}}+ \norm{w_{Y \mathcal{P}}}_{\widetilde{B}}^2 \nonumber \\
   & \geq
   \norm{w_{X \mathcal{P}}}_{\widetilde{B}}^2  -
   \norm{w_{X \mathcal{P}}}_{\widetilde{B}}^2-
   \kappa_2^2 \norm{w_{Y \mathcal{P}}}_{\widetilde{B}}^2 +
   \norm{w_{Y \mathcal{P}}}_{\widetilde{B}}^2 =
   (1-\kappa_2^2)\norm{w_{Y \mathcal{P}}}_{\widetilde{B}}^2.
\end{align}
Combining (\ref{eXStarPineq1}) with (\ref{eXStarPineq2}) gives the right-hand inequality in (\ref{errlevel2}).

{\bf Step 3.}
Combining (\ref{errlevel1}) with (\ref{errlevel2}) and recalling the definition of $\eta$
gives~(\ref{errest}).

{\bf Step 4 ($\kappa_1 = 0$).}
A tighter lower bound in~(\ref{errest}) can be proved in this case.
Indeed, using the ${\widetilde{B}}$-orthogonality of the decomposition
$V_{X \mathcal{P}}^* = V_{X^* \mathcal{P}} \oplus V_{X \mathcal{Q}}$ and applying Lemma~\ref{lem1}
we conclude that
$\norm{\tilde{e}}_{\widetilde{B}}^2 = \norm{e_{X^* \mathcal{P}}}_{\widetilde{B}}^2  + \norm{e_{X \mathcal{Q}}}_{\widetilde{B}}^2$.
%
Combining this equality with the estimates~(\ref{errlevel2}) from Step~2 and
recalling the definition of $\eta$
we obtain (\ref{errest2}).
\end{proof}

Putting together (\ref{eeStar}), (\ref{eSeT}), (\ref{errest}) and (\ref{errest2}),
the following theorem gives two-sided bounds for the energy norm (i.e., $B$-norm)
of the true discretization error $e = u - u_{X \mathcal{P}}$ in terms of the estimate $\eta$.

\begin{theorem} \label{the:second}
Let $u \in V$ be the solution of \eqref{wf} and
let $u_{X \mathcal{P}} \in V_{X \mathcal{P}}$ be the Galerkin approximation satisfying \eqref{dwf}.
Suppose that the saturation assumption \eqref{saturation} and the norm equivalence~\eqref{normequiv} hold.
Then the a posteriori error estimate $\eta$ defined by \eqref{estimate}~satisfies
\begin{align} \label{t2errest}
   \frac{\lambda }{\sqrt{2}} \eta \leq
   \norm{u - u_{X \mathcal{P}}}_B \leq
   \frac{\Lambda }{\sqrt{1-\beta^2}\sqrt{(1-\kappa_1)(1-\kappa_2^2)}}\eta,
\end{align}
where $\beta \in [0,1)$ is the constant in \eqref{saturation},
$\lambda$ and $\Lambda$ are the constants in \eqref{normequiv}, and
$\kappa_1,\, \kappa_2 \in [0,1)$ are the constants in the strengthened Cauchy--Schwarz inequalities \eqref{sCS1},~\eqref{sCS2}.

Furthermore, 
if $\kappa_1 = 0$ in \eqref{sCS1} (that is, $V_{X^*\mathcal{P}}$ and $V_{X\mathcal{Q}}$ are ${\widetilde{B}}$-orthogonal), then
\begin{align} \label{t2errest2}
\lambda \eta  \leq \norm{u - u_{X \mathcal{P}}}_B \leq \frac{\Lambda }{\sqrt{1-\beta^2}\sqrt{1-\kappa_2^2}}\eta.
\end{align}
\end{theorem}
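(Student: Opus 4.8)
The plan is simply to concatenate the three chains of two-sided bounds already assembled in this section, keeping careful track of how the constants multiply. The starting point is the true energy error $\norm{u - u_{X\mathfrak{P}}}_B = \norm{e}_B$. The first link is \eqref{eeStar}, the consequence of the saturation assumption \eqref{saturation}, which sandwiches $\norm{e}_B$ between $\norm{e^*}_B$ and $(1-\beta^2)^{-1/2}\norm{e^*}_B$. The second link is the norm-equivalence estimate \eqref{eSeT}, a direct consequence of \eqref{normequiv}, relating $\norm{e^*}_B$ to $\norm{\tilde e}_{\widetilde B}$ up to the factors $\lambda$ and $\Lambda$. The third link is Lemma~\ref{the:first}, specifically \eqref{errest}, relating $\norm{\tilde e}_{\widetilde B}$ to the computable quantity $\eta$ up to the factors $1/\sqrt 2$ and $(1-\gamma_1)^{-1/2}(1-\gamma_2^2)^{-1/2}$.

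Concretely, for the lower bound in \eqref{t2errest} I would chain the left-hand inequalities of \eqref{eeStar}, \eqref{eSeT}, \eqref{errest} in turn, obtaining $\norm{e}_B \ge \norm{e^*}_B \ge \lambda \norm{\tilde e}_{\widetilde B} \ge \tfrac{\lambda}{\sqrt 2}\,\eta$; for the upper bound I would chain the corresponding right-hand inequalities, obtaining $\norm{e}_B \le \tfrac{1}{\sqrt{1-\beta^2}}\norm{e^*}_B \le \tfrac{\Lambda}{\sqrt{1-\beta^2}}\norm{\tilde e}_{\widetilde B} \le \tfrac{\Lambda}{\sqrt{1-\beta^2}\sqrt{(1-\gamma_1)(1-\gamma_2^2)}}\,\eta$. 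This is exactly \eqref{t2errest}. For the sharpened statement \eqref{t2errest2} under the extra hypothesis $\gamma_1 = 0$, I would repeat the two chains but invoke the tighter bound \eqref{errest2} from Lemma~\ref{the:first} in place of \eqref{errest}; the lower bound then improves to $\norm{e}_B \ge \norm{e^*}_B \ge \lambda\norm{\tilde e}_{\widetilde B} \ge \lambda\eta$, and the upper bound drops the spurious $(1-\gamma_1)^{-1/2}$ factor, giving $\norm{e}_B \le \tfrac{\Lambda}{\sqrt{1-\beta^2}\sqrt{1-\gamma_2^2}}\,\eta$.

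There is essentially no analytical obstacle: all the substantive work — the Galerkin-orthogonality identities \eqref{err:norms:aux1}, the saturation argument (\cite[Theorem~5.1]{Ainsworth2000}), the norm-equivalence argument (\cite[Theorem~5.3]{Ainsworth2000}), and the strengthened Cauchy--Schwarz estimates of Lemma~\ref{the:first}) — has already been carried out. The only point requiring care is bookkeeping: making sure the constants multiply in the correct order, that every inequality in each chain is oriented consistently so the chain does not break, and noting explicitly that $\beta,\gamma_1,\gamma_2 \in [0,1)$ and $\lambda,\Lambda > 0$, so that all the denominators $\sqrt{1-\beta^2}$, $\sqrt{1-\gamma_1}$, $\sqrt{1-\gamma_2^2}$ are well defined and strictly positive. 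I would therefore present the proof as the two displayed inequality chains above, one for each bound, followed by a single sentence indicating the modification in the $\gamma_1 = 0$ case.
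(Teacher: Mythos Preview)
Your proposal is correct and matches the paper's own approach exactly: the paper simply states that the theorem follows by ``putting together \eqref{eeStar}, \eqref{eSeT}, \eqref{errest} and \eqref{errest2}'', which is precisely the three-link chain you describe. There is nothing to add.
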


\begin{remark} \label{rem:err:est}
While error estimates~\eqref{t2errest} are new,
the estimates in~\eqref{t2errest2} have been proved in~\cite[Theorem~4.1]{2016-Bespalov-vol38}
for the model problem~\eqref{paraPDE} with
the diffusion coefficient $T(\bm{x}, \bm{y})$ that has affine dependence on random parameters.
In that framework, the auxiliary bilinear form ${\widetilde{B}}(\cdot,\cdot)$ is associated
with the parameter-free part of the representation for $T(\bm{x}, \bm{y})$ and yields
the orthogonality of the decomposition
$V_{X \mathcal{P}}^* = V_{X^* \mathcal{P}} \oplus V_{X \mathcal{Q}}$.
Thus, Theorem~\ref{the:second} generalizes the results of~\cite{2016-Bespalov-vol38}
to the case of a more general diffusion coefficient $T(\bm{x}, \bm{y})$ that is only assumed to be bounded
(the assumption that ensures the well-posedness of~\eqref{wf}).
In fact, our result is not limited to the diffusion problem~\eqref{paraPDE}.
Theorem~\ref{the:second} applies to tensor-product Galerkin approximations of the solution
to a general variational problem of the type~\eqref{wf} with
symmetric bilinear form $B$ that is continuous and elliptic on a Bochner-type space $V$.
\end{remark}

Recalling that $e^* = u_{X\mathcal{P}}^* - u_{X\mathcal{P}}$ and
putting together (\ref{eSeT}), (\ref{errest}) and (\ref{errest2}),
the following theorem gives two-sided bounds for the error reduction
$\norm{u_{X \mathcal{P}}^* - u_{X \mathcal{P}}}_B$ \rev{in} terms of the estimate $\eta$.

\begin{theorem} \label{the:third}
Let $u_{X \mathcal{P}} \in V_{X \mathcal{P}}$ be the Galerkin approximation satisfying \eqref{dwf},
and let $u_{X \mathcal{P}}^* \in V_{X \mathcal{P}}^*$ be the enhanced Galerkin approximation satisfying \eqref{dwfenhanced}.
Suppose that the norm equivalence \eqref{normequiv} holds.
Then the following estimates for the error reduction~hold:
\begin{align} \label{t2errered}
\frac{\lambda }{\sqrt{2}} \eta\leq \norm{u_{X \mathcal{P}}^* - u_{X \mathcal{P}}}_B \leq
\frac{\Lambda }{\sqrt{(1-\kappa_1)(1-\kappa_2^2)}}\eta,
\end{align}
where
$\lambda$ and $\Lambda$ are the constants in~\eqref{normequiv} and
$\kappa_1,\, \kappa_2 \in [0,1)$ are the constants in the strengthened Cauchy--Schwarz inequalities~\eqref{sCS1},~\eqref{sCS2}.

Furthermore, 
if $\kappa_1 = 0$ in \eqref{sCS1} (that is, $V_{X^*\mathcal{P}}$ and $V_{X\mathcal{Q}}$ are ${\widetilde{B}}$-orthogonal), then
\begin{align} \label{t2errred}
\lambda \eta  \leq \norm{u_{X \mathcal{P}}^* - u_{X \mathcal{P}}}_B \leq \frac{\Lambda }{\sqrt{1-\kappa_2^2}}\eta.
\end{align} 
\end{theorem}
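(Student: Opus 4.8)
The plan is to obtain Theorem~\ref{the:third} by simply chaining together the three two-sided bounds already established: the identity relating the error estimator $e^*$ to the enhanced Galerkin approximation, the comparison~\eqref{eSeT} between $e^*$ and the modified estimator $\tilde{e}$, and the comparison of $\tilde{e}$ with the computable estimate $\eta$ from Lemma~\ref{the:first}. The key observation is that, in contrast with Theorem~\ref{the:second}, \emph{no saturation assumption is needed here}: the quantity being estimated, $\norm{u_{X\mathfrak{P}}^* - u_{X\mathfrak{P}}}_B$, is precisely $\norm{e^*}_B$ via the exact identity $e^* = u_{X\mathfrak{P}}^* - u_{X\mathfrak{P}}$ (recalled just before the statement), which follows by subtracting \eqref{dwf} from \eqref{dwfenhanced} and comparing with \eqref{firstestimator}.

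First I would record $\norm{u_{X\mathfrak{P}}^* - u_{X\mathfrak{P}}}_B = \norm{e^*}_B$. Then, under the norm equivalence~\eqref{normequiv}, the relation~\eqref{eSeT} gives $\lambda\norm{\tilde{e}}_{\widetilde{B}} \leq \norm{e^*}_B \leq \Lambda\norm{\tilde{e}}_{\widetilde{B}}$. Next I would invoke Lemma~\ref{the:first}: in the general case, \eqref{errest} yields $\tfrac{1}{\sqrt{2}}\eta \leq \norm{\tilde{e}}_{\widetilde{B}} \leq \tfrac{1}{\sqrt{(1-\gamma_1)(1-\gamma_2^2)}}\,\eta$, and substituting this into the previous chain produces exactly~\eqref{t2errered}. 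In the special case $\gamma_1 = 0$, I would instead use the sharper bound~\eqref{errest2}, namely $\eta \leq \norm{\tilde{e}}_{\widetilde{B}} \leq \tfrac{1}{\sqrt{1-\gamma_2^2}}\,\eta$, and the same substitution gives~\eqref{t2errred}.

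There is essentially no obstacle to overcome: the statement is a corollary of material proved earlier in the section, and the argument is a short composition of inequalities. The only point I would be careful to highlight in the write-up is the precise list of hypotheses actually used — the norm equivalence~\eqref{normequiv} (for~\eqref{eSeT}) and the strengthened Cauchy--Schwarz constants $\gamma_1,\gamma_2$ (for Lemma~\ref{the:first}) — and, conversely, that the saturation assumption~\eqref{saturation} is not invoked anywhere, which is exactly what makes Theorem~\ref{the:third} a bound on the \emph{error reduction} rather than on the true discretization error.
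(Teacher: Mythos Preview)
Your proposal is correct and matches the paper's own argument exactly: the paper states just before the theorem that it follows by recalling $e^* = u_{X\mathfrak{P}}^* - u_{X\mathfrak{P}}$ and putting together \eqref{eSeT}, \eqref{errest} and \eqref{errest2}. Your additional remark that the saturation assumption is not needed here is accurate and worth keeping.
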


\begin{remark}
Theorem~\ref{the:third} states that $\eta$ provides an estimate for the error reduction
$\norm{u_{X \mathcal{P}}^* - u_{X \mathcal{P}}}_B$.
We distinguish the following two important cases of enriching the approximation space~$V_{X \mathcal{P}}$:
\begin{itemize}
\item[(1)]
If only the finite element space is enriched, that is,
$V_{X \mathcal{P}}^* = V_{X^* \mathcal{P}} = V_{X \mathcal{P}} \oplus V_{Y \mathcal{P}}$, and 
$u_{X^*\mathcal{P}} \in V_{X^* \mathcal{P}}$ denotes the enhanced Galerkin solution, then
$\kappa_1 = 0$ and therefore $\eta = \norm{e_{Y \mathcal{P}}}_{\widetilde{B}}$ provides an effective estimate
for the error reduction $\norm{u_{X^* \mathcal{P}} - u_{X \mathcal{P}}}_B$,~i.e.,
\begin{align} \label{eq:err:red:eyp}
\lambda \norm{e_{Y \mathcal{P}}}_{\widetilde{B}}  \leq \norm{u_{X^* \mathcal{P}} - u_{X \mathcal{P}}}_B \leq \frac{\Lambda }{\sqrt{1-\kappa_2^2}} \norm{e_{Y \mathcal{P}}}_{\widetilde{B}}.
\end{align}

\item[(2)]
If only the polynomial space on $\Gamma$ is enriched, that is,
$V_{X \mathcal{P}}^* = V_{X \mathcal{P}^*} := V_{X \mathcal{P}} \oplus V_{X \mathcal{Q}}$, and 
$u_{X\mathcal{P}^*} \in V_{X \mathcal{P}^*}$ denotes the corresponding enhanced Galerkin solution, then
$\kappa_2 = 0$ and therefore $\eta = \norm{e_{X \mathcal{Q}}}_{\widetilde{B}}$ provides an effective estimate for the error reduction
$\norm{u_{X \mathcal{P}^*} - u_{X \mathcal{P}}}_B$,~i.e.,
\begin{align*}
\frac{\lambda }{\sqrt{2}} \norm{e_{X \mathcal{Q}}}_{\widetilde{B}} \leq
\norm{u_{X \mathcal{P}^*} - u_{X \mathcal{P}}}_B \leq \frac{\Lambda }{\sqrt{\rev{1-\kappa_1}}} \norm{e_{X \mathcal{Q}}}_{\widetilde{B}},
\end{align*}
when $\kappa_1 \neq 0$, and
\begin{align} \label{eq:err:red:exq:old}
\lambda \norm{e_{X \mathcal{Q}}}_{\widetilde{B}}  \leq
\norm{u_{X \mathcal{P}^*} - u_{X \mathcal{P}}}_B \leq \Lambda \norm{e_{X \mathcal{Q}}}_{\widetilde{B}},
\end{align} 
when $\kappa_1 = 0$.
\end{itemize}
Similar to Remark~\ref{rem:err:est}, we emphasize that Theorem~\ref{the:third} generalizes
the results of~\cite{2014-Bespalov-vol36,2016-Bespalov-vol38}, where
the error reduction estimates~\eqref{eq:err:red:eyp}, \eqref{eq:err:red:exq:old} have been proved
for the model problem~\eqref{paraPDE} with
the diffusion coefficient $T(\bm{x}, \bm{y})$ that has affine dependence on random parameters.

\end{remark}

\section{Galerkin approximations for the model problem with coefficient in the gPC expansion form} \label{sec:diffcoe}

While the results of section~\ref{sec:apprerr} hold for a general variational problem of type~\eqref{wf}
(see, e.g., Remark~\ref{rem:err:est}),
we now focus on the steady-state diffusion problem~\eqref{paraPDE}.
For this problem, we use the generalized polynomial chaos expansion
of the diffusion coefficient $T(\bm{x}, \bm{y})$ and specify main ingredients
of computing stochastic Galerkin approximations and the associated error estimators.
Here, and in the rest of the paper, we assume that $T(\bm{x}, \bm{y})$ depends on \emph{finite} number of parameters
$y_m$ ($m = 1,\dots,M$, $M \in {\mathbb{N}}$). As before, we suppose that $T(\bm{x}, \bm{y})$ satisfies
the boundedness assumption~\eqref{boundT}.
Then $T(\bm{x}, \bm{y}) \in W$ can be represented using the gPC expansion
as follows (see, e.g.,~\cite{2002-Xiu-p619} or~\cite[Theorem~3.6]{2012-Ernst-p317}):
\begin{align} \label{coeexpansion}
   T(\bm{x}, \bm{y}) = \sum_{\bm{\gamma}\in \mathbb{N}_0^M} t_{\bm{\gamma}}(\bm{x}) p_{\bm{\gamma}}(\bm{y}),
\end{align}
where the orthonormality of the polynomial basis $\left\{p_{\bm{\gamma}}\right\}_{\bm{\gamma}\in \mathbb{N}_0^M}$ gives
\begin{align} \label{coeexpansioncoe}
    t_{\bm{\gamma}}(\bm{x}) & = \langle T, p_{\bm{\gamma}} \rangle_{\pi} =
    \int_{\Gamma} T(\bm{x}, \bm{y}) p_{\bm{\gamma}}(\bm{y}) q(\bm{y}) \dif \bm{y}\quad
    \forall \bm{\gamma}\in \mathbb{N}_0^M.
\end{align}

\subsection{Discrete formulation revisited} \label{sec:discrete:problem}

Recalling that $X = \spn\{\phi_1, \phi_2, \dots, \phi_{n_X}\}$ and 
$ P_\mathcal{P} = \spn \left\{p_{\bm{\alpha}};\; \bm{\alpha} \in \mathcal{P} \subset \mathbb{N}_0^M\right\}$,
we can write any $u \in V_{X\mathcal{P}} = X \otimes  P_\mathcal{P}$ as
\begin{align} \label{eq:uxp:expand}
   u(\bm{x},\bm{y}) = \sum_{i = 1}^{n_{X}} \sum_{\bm{\alpha}\in\mathcal{P}} u_{i,\bm{\alpha}} \phi_i (\bm{x}) p_{\bm{\alpha}}(\bm{y}),\quad
   u_{i,\bm{\alpha}} \in {\mathbb{R}}.
\end{align}
We note that given multi-indices $\bm{\alpha},\,\bm{\beta},\,\bm{\gamma} \in \mathbb{N}_0^M$,
the orthogonality of the polynomial basis
(with respect to the inner product~$\langle\cdot, \cdot\rangle_{\pi}$)
yields the following property:
\begin{align} \label{eq:3prod:zero}
   \int_{\Gamma}q(\bm{y}) p_{\bm{\alpha}}(\bm{y}) p_{\bm{\beta}}(\bm{y}) p_{\bm{\gamma}}(\bm{y}) \dif \bm{y} =
   \prod_{m=1}^M \int_{\Gamma_m}q_m(y_m) p_{\alpha_m}^m (y_m) p_{\beta_m}^m (y_m) p_{\gamma_m}^m (y_m)\dif y_m = 0
\end{align}
if there exists $m \in \{1,2,\dots,M\}$ such that 
the sum of any two of $\alpha_m$, $\beta_m$ and $\gamma_m$ is less than the third one.
Therefore, with two \emph{finite} index sets $\mathcal{P},\, \mathcal{Q} \subset \mathbb{N}_0^M$,
we obtain by using~\eqref{coeexpansion} in the definition~\eqref{tbf} of the bilinear form~$B(\cdot,\cdot)$
\begin{align} \label{eq:B:represent}
   B(u,v) = &
   \sum_{\bm{\gamma}\in \mathbb{N}_0^M}
   \int_{\Gamma} q p_{\bm{\gamma}} 
   \int_D t_{\bm{\gamma}} \nabla u \cdot \nabla v \dif \bm{x} \dif \bm{y}
   \nonumber\\
   = &
   \sum_{\bm{\gamma}\in \mathcal{N}(\mathcal{P}, \mathcal{Q})}
   \int_{\Gamma} q p_{\bm{\gamma}}
   \int_D t_{\bm{\gamma}} \nabla u \cdot \nabla v\dif \bm{x} \dif \bm{y}
    \quad \forall u \in V_{X\mathcal{P}}, \ \ \forall v \in V_{X\mathcal{Q}},
\end{align}
where
\begin{align} \label{eq:mathcalN:def}
   \mathcal{N}(\mathcal{P}, \mathcal{Q}) \coloneqq
   \big\{\bm{\gamma} \in \mathbb{N}_0^M;\; &
   \exists\, \bm{\alpha} \in \mathcal{P},\, \exists\, \bm{\beta} \in \mathcal{Q}, \text{ such that }
   \nonumber \\
   & |\alpha_m -\beta_m| \leq \gamma_m \leq \alpha_m + \beta_m,\ \forall\, m = 1,\dots,M\big\}.
\end{align}
Thus, by using the Galerkin projection~\rev{\eqref{dwf}} onto the finite-dimensional subspace $V_{X\mathcal{P}}$,
the infinite sum in the expansion~\eqref{coeexpansion} of $T(\bm{x}, \bm{y})$
is effectively truncated to the finite sum over the
indices~$\bm{\gamma} \in \mathcal{N}(\mathcal{P}, \mathcal{P})$\footnote{Note that if $P_{\mathcal{P}}$ is a set of
complete polynomials of total degree $\le d$, then $P_{\mathcal{N}(\mathcal{P}, \mathcal{P})}$
is a set of  complete polynomials of total degree $\le 2d$.}.
In particular, using the representation~\eqref{eq:uxp:expand} for the Galerkin approximation $u_{X\mathcal{P}} \in V_{X\mathcal{P}}$
and setting $v = \phi_j p_{\bm{\beta}}$ in~\eqref{dwf}, we obtain for all $j = 1, \dots, n_X$ and~$\bm{\beta} \in \mathcal{P}$
\begin{align} \label{fullGalerkin}
   \sum_{\bm{\gamma}\in \mathcal{N}(\mathcal{P}, \mathcal{P})}
   \sum_{i=1}^{n_X} \sum_{\bm{\alpha} \in \mathcal{P}} 
   u_{i,\bm{\alpha}}
   \int_D   t_{\bm{\gamma}} \nabla  \phi_i  \cdot \nabla \phi_j   \dif \bm{x}
   \int_{\Gamma} q p_{\bm{\alpha}} p_{\bm{\beta}} p_{\bm{\gamma}}\dif \bm{y} =
   \int_D f\phi_j \dif \bm{x} \int_{\Gamma} q p_{\bm{\beta}}\dif \bm{y}.
\end{align}
Hence, the discrete formulation~\eqref{dwf} results in
\fx{the linear system $A \bm{u} = \bm{b}$
with the matrix~$A$ and the right-hand side vector $\bm{b}$ being defined as follows:
\begin{align*}
& A \coloneqq \sum_{\bm{\gamma}\in \mathcal{N}(\mathcal{P}, \mathcal{P})}
   G_{\bm{\gamma}} \otimes K_{\bm{\gamma}}, \quad \bm{b} \coloneqq \bm{g} \otimes \bm{f}, \nonumber \\
& [G_{\bm{\gamma}}]_{\iota(\bm{\alpha}) \iota(\bm{\beta})} \coloneqq
   \int_{\Gamma} q p_{\bm{\alpha}} p_{\bm{\beta}} p_{\bm{\gamma}} \dif \bm{y},
   \quad \iota(\bm{\alpha}), \iota(\bm{\beta}) = 1, \dots, \#\mathcal{P}, \nonumber \\
& [K_{\bm{\gamma}}]_{ij} \coloneqq \int_D  t_{\bm{\gamma}}   \nabla  \phi_i  \cdot \nabla \phi_j  \dif \bm{x}, \quad i,j = 1, \dots, n_X, \nonumber \\
& [\bm{g}]_{\iota(\bm{\beta})} \coloneqq \int_{\Gamma} q p_{\bm{\beta}}\dif \bm{y}, \quad [\bm{f}]_j \coloneqq \int_D f \phi_j \dif \bm{x},
\end{align*}
where $\iota: \mathcal{P} \to \{1, \dots, \#\mathcal{P}\}$ is a bijection.
Thus, the $[i+(\iota(\bm{\alpha})-1) n_X]$-th entry of the solution vector $\bm{u}$ is given by $u_{i,\bm{\alpha}}$.}

\subsection{Auxiliary bilinear forms} \label{subsec:abf}

An important ingredient of the error estimation strategy described in section~\ref{sec:apprerr}
is the auxiliary bilinear form~$\widetilde{B}(\cdot, \cdot)$.
In this subsection, we consider two choices of $\widetilde{B}(\cdot, \cdot)$,
which both exploit the gPC expansion~(\ref{coeexpansion}) of the diffusion coefficient~$T(\bm{x}, \bm{y})$.

The first auxiliary bilinear form employs the parameter-free part $t_{\bm{0}}(\bm{x})$
in the expansion~\eqref{coeexpansion} of $T(\bm{x}, \bm{y})$:
\begin{align} \label{B0}
   B_0(u, v) \coloneqq
   \int_{\Gamma}q(\bm{y}) \int_D t_{\bm{0}}(\bm{x})
   \nabla u(\bm{x}, \bm{y}) \cdot \nabla v(\bm{x}, \bm{y})\dif \bm{x} \dif \bm{y}.
\end{align}
The auxiliary bilinear form of this type has been used in the a posteriori error analysis of the sGFEM
for problem~\eqref{paraPDE} with the diffusion coefficient $T(\bm{x}, \bm{y})$ having affine dependence on $y_m$
(see, e.g.,~\cite{2014-Bespalov-vol36,2016-Bespalov-vol38,2018-Bespalov-p243}).

Since $\int_{\Gamma} q(\bm{y}) \dif \bm{y} = 1$ and
$T(\bm{x}, \bm{y})$ is bounded (see~(\ref{boundT})), we deduce from~\eqref{coeexpansioncoe} that
\begin{align} \label{boundt0}
   \alpha_{\min} \leq t_{\bm{0}}(\bm{x}) \leq \alpha_{\max} \quad \forall \bm{x} \in D.
\end{align}
Hence, the symmetric bilinear form $B_0(\cdot, \cdot)$ is continuous and elliptic on $V$.
Therefore, it defines an inner product in $V$ which induces the norm
$\norm{v}_{B_0} \coloneqq B_0(v,v)^{1/2}$ that is equivalent to $\norm{v}_V$.
Specifically, using~(\ref{boundt0}), we obtain
\begin{align} \label{B0ineq}
   \alpha_{\min}  \norm{v}_V^2  \leq \fx{\norm{v}_{B_0}^2}\leq \alpha_{\max} \norm{v}_V^2  \quad \forall v \in V.
\end{align}
Furthermore, using (\ref{B0ineq}) together with (\ref{Bineq}), we show that the norm equivalence in~\eqref{normequiv}
holds with $\widetilde{B} = B_0$, $\lambda = \sqrt{\frac{\alpha_{\min}}{\alpha_{\max}}}$ and
$\Lambda = \sqrt{\frac{\alpha_{\max}}{\alpha_{\min}}}$.

Turning now to the error estimators $e_{Y\mathcal{P}}$ and $e_{X\mathcal{Q}}$ that are defined in~\eqref{eYPestimator}
and~\eqref{eXQestimator} by employing the bilinear form $\widetilde{B} = B_0$,
we use the same arguments as in~\S\ref{sec:discrete:problem} (see~\eqref{eq:3prod:zero}--\eqref{eq:mathcalN:def})
to rewrite~\eqref{eYPestimator} and~\eqref{eXQestimator} as follows:
\begin{align}
   B_0(e_{Y\mathcal{P}}, v) & =
   F(v) - \int_{\Gamma} q \int_D
   \bigg( \sum_{\bm{\gamma} \in \mathcal{N}(\mathcal{P}, \mathcal{P})} t_{\bm{\gamma}} p_{\bm{\gamma}} \bigg)
   \nabla u_{X\mathcal{P}} \cdot \nabla v \dif \bm{x} \dif \bm{y} \quad \forall v \in V_{Y\mathcal{P}},
   \label{eYPB0} \\[5pt]
   B_0(e_{X\mathcal{Q}}, v) & =
   F(v) - \int_{\Gamma} q \int_D
   \bigg( \sum_{\bm{\gamma} \in \mathcal{N}(\mathcal{P}, \mathcal{Q})} t_{\bm{\gamma}} p_{\bm{\gamma}} \bigg)
   \nabla u_{X\mathcal{P}} \cdot \nabla v \dif \bm{x} \dif \bm{y} \quad \forall v \in V_{X\mathcal{Q}}.
   \label{eXQB0}
\end{align}
Furthermore,
the definition of the bilinear form $B_0(\cdot,\cdot)$ in~\eqref{B0} and
the orthogonality of the polynomial basis $\left\{p_{\bm{\gamma}}\right\}_{\bm{\gamma}\in \mathcal{Q}}$
(with respect to the inner product $\langle\cdot, \cdot\rangle_{\pi}$)
imply the $B_0$-orthogonality of the direct sum decomposition
$V_{X\mathcal{Q}} = \oplus_{\bm{\mu}\in \mathcal{Q}} X \otimes  P_{\{\bm{\mu}\}}$ (cf.~(\ref{orXQ})).
Therefore, by Corollary~\ref{col1},
the error estimator $e_{X \mathcal{Q}}$ and its norm $\norm{e_{X \mathcal{Q}}}_{B_0}$ can be decomposed
into the contributions associated with individual indices $\bm{\mu} \in \mathcal{Q}$,
see~\eqref{eXQdecomp} and~\eqref{eXQestimatori} with~$\widetilde{B} = B_0$.

The construction of the auxiliary bilinear form $\widetilde{B}(\cdot, \cdot)$ can be linked to designing a preconditioner for
the coefficient matrix associated with the bilinear form $B(\cdot, \cdot)$.
Indeed, the coefficient matrix associated with the auxiliary bilinear form $B_0(\cdot, \cdot)$ has been used in many works as
a preconditioner (called the \emph{mean-based} preconditioner) for linear systems resulting
from sGFEM formulations of parametric PDE problems (see, e.g., \cite{2000-Pellissetti-p607,2009-Powell-p350}).
Conversely, if there exists a good preconditioner for the coefficient matrix associated with bilinear form $B(\cdot, \cdot)$,
then one can try to design the auxiliary bilinear form by mimicking the structure of that preconditioner.
The above reasoning motivates our second choice of the auxiliary bilinear form $\widetilde{B}(\cdot, \cdot)$.
Specifically,
motivated by the Kronecker product structure of the preconditioner proposed in~\cite{2010-Ullmann-p923},
we construct the following bilinear~form:
\begin{align}
\label{B1}
B_1(u, v) \coloneqq  \sum_{\bm{\gamma}\in \mathbb{N}_0^M}\int_{\Gamma}q(\bm{y}) p_{\bm{\gamma}} (\bm{y}) \int_D C_{\bm{\gamma}} t_{\bm{0}}(\bm{x}) \nabla u(\bm{x}, \bm{y}) \cdot \nabla v(\bm{x}, \bm{y})\dif \bm{x} \dif \bm{y},
\end{align}
where $C_{\bm{\gamma}} \in {\mathbb{R}}$ are chosen to minimize the quantity
\fx{
$\mathcal{S} \coloneqq \big\|\sum_{\bm{\gamma}\in \mathbb{N}_0^M} C_{\bm{\gamma}}t_{\bm{0}} p_{\bm{\gamma}} - T\big\|_W^2$.
Using the expansion (\ref{coeexpansion}) of $T$, we rewrite $\mathcal{S}$ as follows:
\begin{align*}
   \mathcal{S} = \bigg\|{\sum_{\bm{\gamma}\in \mathbb{N}_0^M} (C_{\bm{\gamma}}t_{\bm{0}} - t_{\bm{\gamma}})p_{\bm{\gamma}}}\bigg\|_W^2 =
   \int_{\Gamma} q(\bm{y}) \int_D
   \bigg(\sum_{\bm{\gamma}\in \mathbb{N}_0^M}
   (C_{\bm{\gamma}}t_{\bm{0}}(\bm{x}) - t_{\bm{\gamma}}(\bm{x})) p_{\bm{\gamma}}(\bm{y})\bigg)^2 \dif \bm{x} \dif \bm{y}.
\end{align*}
Hence, the values of $C_{\bm{\gamma}}$ can be found from the following equation:
\begin{align*}
   \frac{\partial \mathcal{S}}{\partial C_{\bm{\gamma}}} &=
   \frac{\partial \left(\int_{\Gamma} q(\bm{y})
   \int_D \left(\sum_{\bm{\gamma}'\in \mathbb{N}_0^M}
   \big(C_{\bm{\gamma}'} t_{\bm{0}}(\bm{x}) - t_{\bm{\gamma}'}(\bm{x})\big) P_{\bm{\gamma}'}(\bm{y}) \right)^2
   \dif \bm{x} \dif \bm{y}\right)}{\partial C_{\bm{\gamma}}} \nonumber \\[3pt]
   &=
   2 \sum_{\bm{\gamma}'\in \mathbb{N}_0^M}
   \int_{\Gamma} q(\bm{y}) p_{\bm{\gamma}'}(\bm{y}) p_{\bm{\gamma}}(\bm{y}) \dif \bm{y}
   \int_D \big(C_{\bm{\gamma}'}t_{\bm{0}}(\bm{x}) - t_{\bm{\gamma}'}(\bm{x})\big) t_{\bm{0}}(\bm{x}) \dif \bm{x} \nonumber \\[3pt]
   &=
   2 \int_D \big(C_{\bm{\gamma}}t_{\bm{0}}(\bm{x}) - t_{\bm{\gamma}}(\bm{x})\big) t_{\bm{0}}(\bm{x}) \dif \bm{x} = 0\quad
   \forall\,\bm{\gamma} \in \mathbb{N}^M_0.
\end{align*}
}
As a result, we have
\begin{align} \label{B1c}
   C_{\bm{\gamma}} =
   \frac{\int_D t_{\bm{\gamma}}(\bm{x}) t_{\bm{0}}(\bm{x})  \dif \bm{x} }{\norm{t_{\bm{0}}}_{L^2(D)}^2}\quad
   \forall\,\bm{\gamma} \in \mathbb{N}^M_0
\end{align}
(note that with these values of $C_{\bm{\gamma}}$, one has
$\big\|\sum_{\bm{\gamma}\in \mathbb{N}_0^M} C_{\bm{\gamma}}t_{\bm{0}} p_{\bm{\gamma}}\|_W \le
  \|T\|_W < +\infty).
$

Substituting \eqref{B1c}
into \eqref{B1} and using (\ref{coeexpansion}),
we rewrite $B_1(u, v)$ as follows:
\begin{align} \label{B1rw}
   B_1(u, v) & =
   \fx{\sum_{\bm{\gamma}\in \mathbb{N}_0^M}\int_{\Gamma}q(\bm{y})p_{\bm{\gamma}}(\bm{y})
   \int_D \frac{\int_D t_{\bm{\gamma}}(\bm{x}') t_{\bm{0}}(\bm{x}')  \dif \bm{x}' }{\norm{t_{\bm{0}}}_{L^2(D)}^2}
   t_{\bm{0}}(\bm{x}) \nabla u(\bm{x}, \bm{y}) \cdot \nabla v(\bm{x}, \bm{y})\dif \bm{x} \dif \bm{y}}
   \nonumber\\[3pt]
   & =
   \frac{ \int_{\Gamma} q(\bm{y}) \int_D \int_D t_{\bm{0}}(\bm{x'}) t_{\bm{0}}(\bm{x}) T(\bm{x'}, \bm{y})
           \nabla u(\bm{x}, \bm{y}) \cdot \nabla v(\bm{x}, \bm{y})
           \dif \bm{x'}\dif \bm{x} \dif \bm{y}}
          {\norm{t_{\bm{0}}}_{L^2(D)}^2}.
\end{align}
Using this representation of $B_1(\cdot, \cdot)$ as well as 
the boundedness of $T(\bm{x}, \bm{y})$ and $t_{\bm{0}}(\bm{x})$ (see~\eqref{boundT} and~\eqref{boundt0}, resp.), we conclude
that $B_1(\cdot, \cdot)$
defines an inner product in $V$ which induces the norm $\norm{v}_{B_1} \coloneqq B_1(v, v)^{1/2}$ that is equivalent to $\norm{v}_{V}$. In particular, there holds
\begin{align} \label{B1ineq}
   \frac{\alpha_{\min}^3}{\alpha_{\max}^2} \norm{v}_V^2 \leq \fx{\norm{v}_{B_1}^2} \leq
   \frac{\alpha_{\max}^3}{\alpha_{\min}^2} \norm{v}_V^2 \quad \forall v \in V.
\end{align}
Furthermore, using (\ref{B1ineq}) together with (\ref{Bineq}) shows that the norm equivalence in~\eqref{normequiv}
holds with $\widetilde{B} = B_1$, $\lambda = \big(\frac{\alpha_{\min}}{\alpha_{\max}}\big)^{3/2}$ and
$\Lambda = \big(\frac{\alpha_{\max}}{\alpha_{\min}}\big)^{3/2}$.

If the bilinear form $B_1(\cdot, \cdot)$ is employed to define the error estimators
$e_{Y\mathcal{P}} \in V_{Y\mathcal{P}}$ and $e_{X\mathcal{Q}}\in V_{X\mathcal{Q}}$,
then the associated discrete formulations~(\ref{eYPestimator}) and~(\ref{eXQestimator})
can be rewritten as follows (here, we use the same arguments as in~\S\ref{sec:discrete:problem}):
\begin{align} \label{eYPB1}
   \int_{\Gamma} q \cdot
   \bigg( \sum_{\bm{\gamma} \in \mathcal{N}(\mathcal{P}, \mathcal{P})} &
            C_{\bm{\gamma}} p_{\bm{\gamma}}
   \bigg)
   \int_D
   t_{\bm{0}} \nabla e_{Y\mathcal{P}} \cdot \nabla v \dif \bm{x} \dif \bm{y}
   \nonumber \\[5pt]
   & \!\!= F(v) -
   \int_{\Gamma} q \int_D
   \bigg( \sum_{\bm{\gamma} \in \mathcal{N}(\mathcal{P}, \mathcal{P})}
   t_{\bm{\gamma}} p_{\bm{\gamma}} \bigg) \nabla u_{X\mathcal{P}} \cdot \nabla v \dif \bm{x} \dif \bm{y}
   \quad \forall v \in V_{Y\mathcal{P}},\\
   \label{eXQB1}
   \int_{\Gamma} q \cdot
   \bigg(\sum_{\bm{\gamma}\in \mathcal{N}(\mathcal{Q}, \mathcal{Q})} &
            C_{\bm{\gamma}}  p_{\bm{\gamma}}
   \bigg)
   \int_D
   t_{\bm{0}} \nabla e_{X\mathcal{Q}} \cdot \nabla v\dif \bm{x} \dif \bm{y}
   \nonumber \\
   & \!\!= F(v) -
   \int_{\Gamma}q \int_D
   \bigg(\sum_{\bm{\gamma}\in \mathcal{N}(\mathcal{P}, \mathcal{Q})}
   t_{\bm{\gamma}}p_{\bm{\gamma}}\bigg) \nabla u_{X\mathcal{P}} \cdot \nabla v \dif \bm{x} \dif \bm{y}
   \quad \forall v \in V_{X\mathcal{Q}}.
\end{align}
Comparing the left-hand sides in \eqref{eYPB1}, \eqref{eXQB1} with those in~\eqref{eYPB0},~\eqref{eXQB0},
respectively, it is easy to see that the computational cost associated with assembling linear systems for computing
the error estimators $e_{Y\mathcal{P}}$ and $e_{X\mathcal{Q}}$
will be significantly lower if the bilinear form $B_0$ is employed to define these estimators.

\subsection{Detail index set} \label{subsec:dis}

We now discuss the construction of the detail index set $\mathcal{Q}$ for computing the error estimator
$e_{X\mathcal{Q}}$ defined by~(\ref{eXQestimator}) in the case when the diffusion coefficient~$T(\bm{x},\bm{y})$
is given by its gPC expansion~(\ref{coeexpansion}).
Let $\mathcal{J} \subset {\mathbb{N}}_0^M$ denote the index set
such that all non-zero terms in expansion~(\ref{coeexpansion})
are indexed by $\bm{\gamma} \in \mathcal{J}$.
We will distinguish between two cases:
(i) $\mathcal{J}$ is a \emph{finite} index set; and
(ii) $\mathcal{J}$ is an \emph{infinite} (countable) set.

If the auxiliary bilinear form $\widetilde{B}$ satisfies (\ref{orXQ})
(which is the case when $\widetilde{B} = B_0$),
then by Corollary~\ref{col1}, the estimator $e_{X\mathcal{Q}}$
is the sum of individual estimators $e_{X\mathcal{Q}}^{(\bm{\mu})}$ ($\bm{\mu} \in \mathcal{Q}$) satisfying~(\ref{eXQestimatori}).
In this case, for a given~$\bm{\mu} \in \mathcal{Q}$, $e_{X\mathcal{Q}}^{(\bm{\mu})} = 0$  if and only if
the right-hand side \fx{of} (\ref{eXQestimatori}) is equal to zero for all $v \in X\otimes P_{\{\bm{\mu}\}}$,
which is equivalent to
$B(u_{X\mathcal{P}}, v) = 0$ for all $v \in X\otimes P_{\{\bm{\mu}\}}$
(note that $F(v) = 0$ for all $v \in X\otimes P_{\{\bm{\mu}\}}$, since
$\bm{0} \notin \mathcal{Q}$ and hence $\bm{\mu} \not= \bm{0}$).
Assume that $\mathcal{J}$ is a finite index set.
Then, recalling the definition of $\mathcal{N}(\cdot,\cdot)$ in~\eqref{eq:mathcalN:def}
and the orthogonality property~\eqref{eq:3prod:zero},
we conclude that
$e_{X\mathcal{Q}}^{(\bm{\mu})} = 0$
for any $\bm{\mu} \in \mathbb{N}_0^M \backslash \mathcal{N}(\mathcal{P}, \mathcal{J})$.
Therefore, for a finite index set $\mathcal{J}$ and an auxiliary bilinear form $\widetilde{B}$ satisfying~(\ref{orXQ}),
a natural choice of the detail index set is $\mathcal{Q} := \mathcal{N}(\mathcal{P}, \mathcal{J})\backslash\mathcal{P}$.

If $\widetilde{B}$ does not satisfy (\ref{orXQ}) (which is the case when $\widetilde{B} = B_1$)
or $\mathcal{J}$ is an infinite index set, then, in general,
we can only build the \emph{finite} detail index set~$\mathcal{Q}$ heuristically.


\section{Numerical experiments: error estimation} \label{sec:numer:errest}

The aim of this section is to test the error estimation strategy from~\S\ref{sec:apprerr}
for the model problem~(\ref{paraPDE}) with a
\fx{non-affine parametric representation of the diffusion coefficient.}
To~that end, we set $f(\bm{x}) = 1$ and $T(\bm{x}, \bm{y}) = \exp(a(\bm{x}, \bm{y}))$,
where $a(\bm{x}, \bm{y})$ is represented as~follows:
\begin{align} \label{kl}
   a(\bm{x}, \bm{y}) = a_0(\bm{x}) + \sum_{m = 1}^M a_m(\bm{x}) y_m,\quad
   \bm{x} \in D,\ \bm{y} \in \Gamma.
\end{align}
Here, we assume that $y_m$ are the images of \emph{independent and identically distributed} random variables
\fx{that follow} the same truncated Gaussian probability density function
\begin{align} \label{pdftG}
   q_m(y_m) = \frac{\exp(-y_m^2/2\sigma_0^2)}{\sigma_0\sqrt{2\pi}\erf(1/\sqrt{2}\sigma_0)},
\end{align}
where $\erf(\cdot)$ is the error function and $\sigma_0$ is a parameter of the truncated Gaussian distribution
measuring the standard deviation.

Note that for $T = \exp(a)$ and $a$ given by~\eqref{kl}, the gPC expansion~(\ref{coeexpansion})
has infinite number of non-zero terms;
the formulae for calculating the expansion coefficients $t_{\bm{\gamma}}$
in this case are given in Appendix~\ref{appA}.
The following two examples of decompositions of $a(\bm{x}, \bm{y})$ are considered in our~experiments.

\begin{example} \label{tp1}
Let $D = (-1, 1)^2$.
We assume that $a(\bm{x}, \bm{y})$ is represented by a truncated Karhunen--Lo\`{e}ve expansion
of a second-order random field with the mean $\mathbb{E}[a] = 1$ and the covariance function given by
\begin{align} \label{cov}
   \hbox{\rm Cov}[a](\bm{x}, \bm{x'}) = \sigma^2 \exp\left( -\frac{\abs{x_1 - x_1'}}{\ell_1}  -\frac{\abs{x_2 - x_2'}}{\ell_2} \right),
\end{align}
where $\sigma$ is the standard deviation and $\ell_1$, $\ell_2$ are correlation lengths
(we set $\ell_1 = \ell_2 = 1$).

Thus, in \eqref{kl}, we have: $a_0 = 1$ and $a_m(\bm{x}) = \sqrt{\lambda_m} \varphi_m(\bm{x})$ ($m = 1,\ldots,M$),
where $\{(\lambda_m, \varphi_m)\}_{m=1}^\infty$ are the eigenpairs of the integral operator
$\int_D \hbox{\rm Cov}[a](\bm{x}, \bm{x'}) \varphi(\bm{x'}) \dif \bm{x'}$
(see, e.g.~\cite[pp.~28--29]{gs91}).
\end{example}

\begin{example} \label{tp2}
Let $D = (0, 1)^2$, $a_0 = 1$ and choose the spatial coefficient functions $a_m(\bm{x})$ ($m=1,\ldots,M$) in~\eqref{kl}
as those introduced in~\cite[section 11]{2014-Eigel-vol270}:
\begin{align} \label{exeam}
   a_m (\bm{x}) =  \bar{\alpha} m^{-\bar{\sigma}} \cos(2\pi \bar{\beta}_1(m)x_1) \cos(2\pi\bar{\beta}_2(m)x_2), \quad \bm{x} = (x_1,x_2) \in D.
\end{align}
Here, $\rev{\bar{\sigma}} > 1$  characterizes the decay rate of the amplitudes $\bar{\alpha} m^{-\rev{\bar{\sigma}}}$
of these coefficients (we set $\rev{\bar{\sigma}} = 2$ in our experiments),
$\bar{\alpha} > 0$, and $\bar{\beta}_1,\,\bar{\beta}_2$ are defined as
\begin{align*}
\bar{\beta}_1(m) = m - \bar{k}(m)(\bar{k}(m)+1)/2 \text{ \ and \ } 
\bar{\beta}_2(m) = \bar{k}(m) - \bar{\beta}_1(m)
\end{align*}
with
$\bar{k}(m) = \lfloor -1/2 + \sqrt{1/4+2m}\rfloor$.
 \end{example}

All experiments in this section and in section~\ref{sec:numer:adapt} were performed using the open source MATLAB toolbox
S-IFISS~\cite{SIFISS}.
In our computations, we use the finite element space $X = X(h)$ of bilinear ($Q1$) 
approximations on uniform grids $\Box_h$ of square elements with edge length $h$.
In this case, the detail finite element space $Y = Y(h)$ is the span of the set of bilinear bubble functions
corresponding to edge midpoints and element centroids of the grid.
For the polynomial approximation on $\Gamma$, we first
construct a polynomial basis in $L^2_\pi(\Gamma)$ by tensorizing
univariate orthonormal polynomials generated by the probability density function~\eqref{pdftG}
(these polynomials are known in the literature as Rys polynomials, see, e.g.,~\cite[Example~1.11]{Gautschi2004});
then we employ the set $P_{M,d}$
of complete polynomials of degree $\le d$ in $M$ variables,
$P_{M,d} \coloneqq \spn \left\{p_{\bm{\alpha}};\; \bm{\alpha} \in \mathcal{P}_{M,d}\right\}$, where
\[
  \mathcal{P}_{M,d} \coloneqq \left\{\bm{\alpha} = (\alpha_1, \cdots, \alpha_M)\in \mathbb{N}_0^M;\;
  \alpha_1 + \ldots \alpha_M \leq d \right\}.
\]
Thus, given $h$, $M$ and $d$, we compute the Galerkin approximation $u_{X \mathcal{P}} \in X(h) \otimes P_{M,d}$
satisfying~\eqref{dwf}.

The spatial error estimator $e_{Y \mathcal{P}}$ satisfying (\ref{eYPestimator}) is computed approximately
by using a standard element residual technique (see, e.g., \cite{Ainsworth2000}).
Specifically, we solve the following local residual problems associated with~(\ref{eYPestimator}):
find $e_{Y\mathcal{P}}|_S \in Y(h)|_S \otimes P_{\mathcal{P}}$ satisfying
\begin{align}
\label{localeYP}
\widetilde{B}_S(e_{Y\mathcal{P}}|_S, v) = F_S(v) &+\int_{\Gamma} q(\bm{y}) \int_S \nabla\cdot \left(T(\bm{x}, \bm{y})\nabla u_{X\mathcal{P}}(\bm{x}, \bm{y})\right) v(\bm{x}, \bm{y}) \dif \bm{x} \dif \bm{y} \nonumber \\
& - \frac{1}{2}\int_{\Gamma} q(\bm{y}) \int_{\partial S \backslash \partial D} T(s, \bm{y}) \left\llbracket \frac{\partial u_{X\mathcal{P}}}{\partial n}\right\rrbracket  v(s, \bm{y}) \dif s \dif \bm{y}
\end{align}
for any $v \in Y(h)|_S \otimes P_{\mathcal{P}}$.
Here, $\widetilde{B}_S$ and $F_S$ denote the elementwise auxiliary bilinear form and linear functional, respectively; $Y(h)|_S$ is the restriction of $Y(h)$ to the element $S\in \Box_h$; and $\left\llbracket \frac{\partial u_{X\mathcal{P}}}{\partial n}\right\rrbracket$ denotes the flux jump in the approximate solution $u_{X\mathcal{P}}$ across interelement edges.
The parametric error estimator $e_{X \mathcal{Q}}$ is computed by solving~\eqref{eXQestimator}
(see also~\eqref{eXQdecomp}--\eqref{eXQestimatori} in the case $\widetilde{B} = B_0$).
Then two total error estimates are computed as follows (see~\eqref{estimate} with $\widetilde{B} = B_0$ and $\widetilde{B} = B_1$, resp.):
\begin{align} \label{estB01}
   \eta_0 =
   \bigg( \sum_{S \in \Box_h}\norm{e_{Y\mathcal{P}}|_S}_{B_{0,S}}^2 +
   \sum_{\bm{\mu} \in \mathcal{Q}} \norm{e^{(\bm{\mu})}_{X \mathcal{Q}}}_{B_0}^2\bigg)^{1/2},
   \ \
   \eta_1 =
   \bigg( \sum_{S \in \Box_h}\norm{e_{Y\mathcal{P}}|_S}_{B_{1,S}}^2 + \norm{e_{X \mathcal{Q}}}_{B_1}^2\bigg)^{1/2}.
\end{align} 
In the experiments below, we will examine the quality of the error estimates $\eta_0$ and $\eta_1$
by computing the corresponding effectivity indices
\begin{align} \label{thetaeff}
   \Theta_{i} \coloneqq  \frac{\eta_i}{\sqrt{\norm{u_{\text{ref}}}_B^2 - \norm{u_{X\mathcal{P}}}_B^2}},\quad
   i = 0,1,
\end{align}
where $u_{\text{ref}} \in X(h_{\text{ref}}) \otimes P_{M, d_{\text{ref}}}$ is
an accurate (reference) solution computed using biquadratic ($Q_2$) approximations on a uniform grid $\Box_{h_{\text{ref}}}$
with $h_{\text{ref}} < h$ and an enriched polynomial space $P_{M, d_{\text{ref}}}$ with $d_{\text{ref}} > d$.

In the experiments below, we set $\sigma_0 = 1$ in (\ref{pdftG}) and fix $M = 3$, $d = 2$.

\begin{table}
\centering
\begin{tabular}{ccccccccc}  
\toprule
&\multicolumn{2}{c}{$\sigma = 0.2$}&\multicolumn{2}{c}{$\sigma = 0.4$}&\multicolumn{2}{c}{$\sigma = 0.6$}&\multicolumn{2}{c}{$\sigma = 0.8$}\\
\cmidrule(r){2-9}
$h$ & $\Theta_0$ & $\Theta_1$& $\Theta_0$ & $\Theta_1$ & $\Theta_0$ & $\Theta_1$ & $\Theta_0$ & $\Theta_1$\\
\midrule
$2^{-1}$       & 1.3275 & 1.3238& 1.3331 &  1.3186 & 1.3434 & 1.3125 & 1.3599 & 1.3093\\
$2^{-2}$      & 1.1370 & 1.1331 & 1.1531 & 1.1382 & 1.1807 & 1.1496 & 1.2206 & 1.1714       \\
$2^{-3}$      & 1.0198 & 1.0162 & 1.0389 & 1.0254 & 1.0715 & 1.0439 &1.1179 &1.0754     \\
$2^{-4}$      & 0.9513 & 0.9480 & 0.9708 & 0.9586 & 1.0042 & 0.9797  &1.0515 & 1.0143    \\
$2^{-5}$      & 0.9134 & 0.9104 & 0.9329 & 0.9215 & 0.9668 & 0.9441  &1.0142 & 0.9793    \\
\bottomrule
\end{tabular}
\caption{The effectivity indices for Galerkin approximations $u_{X\mathcal{P}} \in X(h) \otimes P_{3, 2}$
for the model problem~\eqref{paraPDE} with $T(\bm{x}, \bm{y}) = \exp(a(\bm{x}, \bm{y}))$
and the decomposition of $a(\bm{x}, \bm{y})$ as in Example~\ref{tp1} with $\ell_1 = \ell_2 =1$.
The fixed detail index set $\mathcal{Q} = \mathcal{P}_{3, 6}  \backslash \mathcal{P}_{3, 2}$
is employed to compute the underlying error estimates.}
\label{tab:eff_ind_1_tp1}
\end{table}

In the first set of experiments, we consider two model problems described above
and vary the parameters that characterize the magnitude of the spatial coefficient functions $a_m$
in~\eqref{kl} (i.e., the parameters $\sigma$ and $\bar{\alpha}$ in Examples~\ref{tp1} and~\ref{tp2}, respectively).
Specifically, we choose $\sigma,\,\bar{\alpha} \in \{0.2,\,0.4,\,0.6,\,0.8\}$.
For each problem, we use spatial grids of decreasing mesh size $h = 2^{-j} \sqrt{|D|}$ ($j = 2,\ldots,6$)
to compute a sequence of Galerkin approximations $u_{X \mathcal{P}}$
and the corresponding error estimates $\eta_0,\,\eta_1$ defined in~\eqref{estB01}.
In particular, the parametric error estimators $e_{X \mathcal{Q}}$ are computed with the 
detail index set $\mathcal{Q} = \mathcal{P}_{3, 6}  \backslash \mathcal{P}_{3, 2}$.
For each computed error estimate, we calculate the effectivity index via~\eqref{thetaeff}.
Here, we use the reference solutions
$u_{\text{ref}} \in X(h_{\text{ref}}) \otimes P_{3,4}$,
where we choose $h_{\text{ref}} = 2^{-7}\sqrt{|D|}$.
The results of these computations are presented in Table~\ref{tab:eff_ind_1_tp1} (for the decomposition of $a(\bm{x},\bm{y})$ in Example~\ref{tp1})
and in Table~\ref{tab:eff_ind_1_tp2} (for the decomposition in~Example~\ref{tp2}).

From Tables~\ref{tab:eff_ind_1_tp1} and~\ref{tab:eff_ind_1_tp2} we find that both effectivity indices $\Theta_0$ and $\Theta_1$
are close to unity and decrease
as the spatial grid is refined or the corresponding coefficient parameter ($\sigma$ or $\bar\alpha$) decreases.
We also observe that $\Theta_0 > \Theta_1$ in each case, and the difference between $\Theta_0$ and $\Theta_1$ grows
as $\sigma$ and $\bar\alpha$~increase.

In the second set of experiments, we consider the same model problems as in the first set of experiments
but choose larger problem parameters, namely $\sigma,\, \bar\alpha \in \{1,\,3,\,5\}$.
In each case, we compute the Galerkin approximation $u_{X\mathcal{P}} \in X(h) \otimes P_{3, 2}$
with fixed $h = 2^{-5}\sqrt{|D|}$.
For each Galerkin approximation, two sequences of error estimates $\{\eta_0\}$ and $\{\eta_1\}$
are computed with different detail index sets;
specifically, we use $\mathcal{Q} = \mathcal{P}_{3, \bar{d}}  \backslash \mathcal{P}_{3, 2}$
with $\bar{d} \in \{3,4,\ldots,7\}$.
Then, the effectivity index
is calculated for each error estimate;
here, we again use the corresponding reference solutions
$u_{\text{ref}} \in X(h_{\text{ref}}) \otimes P_{3,4}$ with $h_{\text{ref}} = 2^{-7}\sqrt{|D|}$.
The effectivity indices are reported in Table~\ref{tab:eff_ind_2_tp1} (for the decomposition of $a(\bm{x},\bm{y})$ in Example~\ref{tp1})
and in Table~\ref{tab:eff_ind_2_tp2} (for the decomposition in~Example~\ref{tp2}).

From Tables~\ref{tab:eff_ind_2_tp1} and~\ref{tab:eff_ind_2_tp2} we 
again observe that $\Theta_0 > \Theta_1$ for each fixed $\sigma$ (resp., $\bar\alpha$) and for each detail index set.
Furthermore, for fixed $\sigma$ and $\bar\alpha$,
the effectivity indices in each sequence $\{\Theta_0\}$ and $\{\Theta_1\}$ approach their limiting values as the detail index set expands.
This convergence to limiting values is faster for smaller values of $\sigma$ and $\bar\alpha$.
For all $\sigma$ in Table~\ref{tab:eff_ind_2_tp1},
the limiting values of $\Theta_0$ stay close to unity,
whereas the limiting values of $\Theta_1$ decrease rapidly away from unity as $\sigma$ increases
(see the last two columns in Table~\ref{tab:eff_ind_2_tp1}).
This shows a robustness of the error estimate $\eta_0$
with respect to the `roughness' of the parametric coefficient in Example~\ref{tp1}.
This difference between the limiting values of $\Theta_0$ and $\Theta_1$ is less pronounced
for the parametric coefficient in Example~\ref{tp2} for given values of $\bar\alpha$
(see the last two columns in Table~\ref{tab:eff_ind_2_tp2}).
We can see, however, a faster decay of $\Theta_1$ as $\bar\alpha$ increases,
which indicates a deterioration of quality of the error estimate $\eta_1$ for larger values of~$\bar\alpha$.

\begin{table}
\centering
\begin{tabular}{ccccccccc}  
\toprule
&\multicolumn{2}{c}{$\bar{\alpha} = 0.2$}&\multicolumn{2}{c}{$\bar{\alpha} = 0.4$}&\multicolumn{2}{c}{$\bar{\alpha} = 0.6$}&\multicolumn{2}{c}{$\bar{\alpha} = 0.8$}\\
\cmidrule(r){2-9}
$h$ & $\Theta_0$ & $\Theta_1$& $\Theta_0$ & $\Theta_1$ & $\Theta_0$ & $\Theta_1$ & $\Theta_0$ & $\Theta_1$\\
\midrule
$2^{-2}$       & 1.3785 & 1.3783& 1.5197 &  1.5177 & 1.7139 & 1.7055 & 1.9284 & 1.9061\\
$2^{-3}$      & 1.1806 & 1.1805 & 1.3144 & 1.3126 & 1.5056 & 1.4975 & 1.7288 & 1.7066       \\
$2^{-4}$      & 1.0674 & 1.0673 & 1.2133 & 1.2113 & 1.4193 & 1.4108 &1.6575 &1.6344     \\
$2^{-5}$      & 1.0029 & 0.0028 & 1.1583 & 1.1563 & 1.3742 & 1.3655  &1.6177 & 1.5941    \\
$2^{-6}$      & 0.9678 & 0.9676 & 1.1289 & 1.1268 & 1.3486 & 1.3397  &1.5808 & 1.5571    \\
\bottomrule
\end{tabular}
\caption{The effectivity indices for Galerkin approximations $u_{X\mathcal{P}} \in X(h) \otimes P_{3, 2}$
for the model problem~\eqref{paraPDE} with $T(\bm{x}, \bm{y}) = \exp(a(\bm{x}, \bm{y}))$
and the decomposition of $a(\bm{x}, \bm{y})$ as in Example~\ref{tp2} with $\bar{\sigma} = 2$.
The fixed detail index set $\mathcal{Q} = \mathcal{P}_{3, 6}  \backslash \mathcal{P}_{3, 2}$
is employed to compute the underlying error estimates.}
\label{tab:eff_ind_1_tp2}
\end{table}

\begin{table}
\setlength{\tabcolsep}{5pt}
\centering
\begin{tabular}{ccccccccccccc}  
\toprule
&\multicolumn{2}{c}{$\mathcal{P}_{3, 3}  \backslash \mathcal{P}_{3, 2}$}&\multicolumn{2}{c}{$\mathcal{P}_{3, 4}  \backslash \mathcal{P}_{3, 2}$}&\multicolumn{2}{c}{$\mathcal{P}_{3, 5}  \backslash \mathcal{P}_{3, 2}$}&\multicolumn{2}{c}{$\mathcal{P}_{3, 6}  \backslash \mathcal{P}_{3, 2}$}&\multicolumn{2}{c}{$\mathcal{P}_{3, 7}  \backslash \mathcal{P}_{3, 2}$}\\
\cmidrule(r){2-11}
$\sigma$ & $\Theta_0$ & $\Theta_1$& $\Theta_0$ & $\Theta_1$ & $\Theta_0$ & $\Theta_1$& $\Theta_0$ & $\Theta_1$& $\Theta_0$ & $\Theta_1$\\
\midrule
1       &   1.1027 & 1.0591   & 1.1075 &  1.0601 & 1.1078& 1.0601  & 1.1078 &  1.0601 & 1.1078& 1.0601\\
3      & 0.7658 & 0.6889 & 1.0578 & 0.7796 & 1.1469 & 0.7849    & 1.1634 &  0.7862 & 1.1654 & 0.7863     \\
5      & 0.4398 & 0.3543 & 0.8163 & 0.2236 & 1.0517 & 0.5773    & 1.1512 &  0.5877 & 1.1813 & 0.5900   \\
\bottomrule
\end{tabular}
\caption{The effectivity indices for Galerkin approximations $u_{X\mathcal{P}} \in X(h) \otimes P_{3, 2}$
with $h = 2^{-4}$ for the model problem~\eqref{paraPDE} with $T(\bm{x}, \bm{y}) = \exp(a(\bm{x}, \bm{y}))$
and the decomposition of $a(\bm{x}, \bm{y})$ as in Example~\ref{tp1} with $\ell_1 = \ell_2 =1$.
The sequence of expanded index sets 
$\mathcal{Q} = \mathcal{P}_{3, \bar{d}}  \backslash \mathcal{P}_{3, 2}$
with $\bar{d} \in \{3,4,\ldots,7\}$ is employed to compute the underlying error estimates.}
\label{tab:eff_ind_2_tp1}
\end{table}

\begin{table}
\setlength{\tabcolsep}{5pt}
  \centering
\begin{tabular}{ccccccccccccc}  
\toprule
&\multicolumn{2}{c}{$\mathcal{P}_{3, 3}  \backslash \mathcal{P}_{3, 2}$}&\multicolumn{2}{c}{$\mathcal{P}_{3, 4}  \backslash \mathcal{P}_{3, 2}$}&\multicolumn{2}{c}{$\mathcal{P}_{3, 5}  \backslash \mathcal{P}_{3, 2}$}&\multicolumn{2}{c}{$\mathcal{P}_{3, 6}  \backslash \mathcal{P}_{3, 2}$}&\multicolumn{2}{c}{$\mathcal{P}_{3, 7}  \backslash \mathcal{P}_{3, 2}$}\\
\cmidrule(r){2-11}
$\bar{\alpha}$ & $\Theta_0$ & $\Theta_1$& $\Theta_0$ & $\Theta_1$ & $\Theta_0$ & $\Theta_1$& $\Theta_0$ & $\Theta_1$& $\Theta_0$ & $\Theta_1$\\
\midrule
1       &   1.8582 & 1.8090   & 1.8589 &  1.8097 & 1.8589& 1.8097  & 1.8589 &  1.8097 & 1.8589& 1.8097\\
3      & 1.6786 & 1.3352 & 1.7779 & 1.4150 & 1.7996 & 1.4276    & 1.8038 &  1.4331 & 1.8042 & 1.4341     \\
5      & 0.9080 & 0.7911 & 1.0825 & 0.8825 & 1.1786 & 0.9253    & 1.2218 &  0.9413 & 1.2353 & 0.9517   \\
\bottomrule
\end{tabular}
\caption{The effectivity indices for Galerkin approximations $u_{X\mathcal{P}} \in X(h) \otimes P_{3, 2}$
with $h = 2^{-5}$ for the model problem~\eqref{paraPDE} with $T(\bm{x}, \bm{y}) = \exp(a(\bm{x}, \bm{y}))$
and the decomposition of $a(\bm{x}, \bm{y})$ as in Example~\ref{tp2} with $\bar{\sigma} = 2$.
The sequence of expanded index sets 
$\mathcal{Q} = \mathcal{P}_{3, \bar{d}}  \backslash \mathcal{P}_{3, 2}$
with $\bar{d} \in \{3,4,\ldots,7\}$ is employed to compute the underlying error estimates.
}
\label{tab:eff_ind_2_tp2}
\end{table}

Based on the numerical results reported in this section, we conclude that
the bilinear form $\widetilde{B} = B_0$ is preferable to the bilinear form $\widetilde{B} = B_1$
for estimating the energy errors in sGFEM approximations for problems
with \fx{non-affine parametric representations of coefficients}.
Indeed, it follows from the numerical comparison of the associated effectivity indices that
the quality of the error estimate $\eta_0$ that employs $B_0$ is, in general, not worse than that
of the error estimate $\eta_1$ employing $B_1$.
Furthermore, as emphasized in~\S\ref{subsec:abf}, using the bilinear form $B_0$ is also preferable
from the computational cost point of view.
In addition to that, the $B_0$-orthogonality of the direct sum $\oplus_{\bm{\mu}\in \mathcal{Q}} X \otimes P_{\{\bm{\mu}\}}$
gives immediate access to individual parametric estimators $e^{(\bm{\mu})}_{X \mathcal{Q}}$ ($\bm{\mu} \in \mathcal{Q}$),
which is critical for building adaptive polynomial approximations on the parameter domain.
All this motivates the choice of the auxiliary bilinear form $\widetilde{B}$ and the associated energy error estimators
in the adaptive algorithm presented in the next section.

\section{Adaptive algorithm} \label{sec:adaptive}

In this section, we present an adaptive solution algorithm for the model problem~(\ref{paraPDE}).
\rev{Our focus here is on effective enrichment of the polynomial space in the parameter domain.}
We follow the ideas developed in~\cite{2016-Bespalov-vol38} but use D{\"o}rfler marking for enriching polynomial
approximations on $\Gamma$ and employ the error reduction estimates for \emph{marked} polynomial basis functions
in order to choose the refinement type (spatial \emph{vs.} parametric) at each iteration step (cf.~\cite[section~5]{2018-Bespalov-p243}).
\rev{The choice of D{\"o}rfler marking is motivated by the fact that it facilitates convergence analysis of adaptive algorithms
(cf.~\cite{doerfler, 2015-Eigel-p1367, 2019-Bespalov-p2359}); in particular, linear convergence of adaptive stochastic Galerkin
approximations is only proved in the case of D{\"o}rfler marking;
see~\cite[Theorem~7.2]{2015-Eigel-p1367} and~\cite[Theorem~8]{2019-Bespalov-p2359}.
}

Starting with a coarse grid of edge length $h_0$ and an initial index set $\mathcal{P}_0 \supseteq \mathcal{P}_{M,1}$,
the adaptive algorithm  generates a sequence of finite element spaces
\begin{align*}
X(h_0) \subseteq X(h_1) \subseteq X(h_2) \subseteq \cdots \subseteq X(h_K) \subset H_0^1(D),
\end{align*}
a sequence of polynomial spaces
\begin{align*}
P_{\mathcal{P}_0} \subseteq P_{\mathcal{P}_1} \subseteq P_{\mathcal{P}_2} \subseteq \cdots \subseteq
P_{\mathcal{P}_K} \subset L_{\pi}^2(\Gamma),
\end{align*}
and a sequence of Galerkin solutions $u^{(k)} \in V_{X\mathcal{P}}^k \coloneqq X(h_k) \otimes P_{\mathcal{P}_k}$.

At each iteration step $k$,  the Galerkin solution $u^{(k)}$ satisfying (\ref{dwf}) is computed by the subroutine \texttt{SOLVE} as follows:
\begin{align*}
u^{(k)} = \texttt{SOLVE}(T, f,  h_k, \mathcal{P}_k),
\end{align*}
where $T$ and $f$ are the problem data (see (\ref{paraPDE})).

At the error estimation step, we choose $\widetilde{B} = B_0$.
With this choice of $\widetilde{B}$, \fx{
we use (\ref{localeYP}) to compute the local (spatial) estimators $\{e_{Y\mathcal{P}}|_S\}_{S \in \Box_{h_k}}$
and employ~(\ref{eXQB0}) to compute the parametric estimator $e_{X\mathcal{Q}}$;
the latter gives access to 
individual parametric estimators $\big\{e_{X\mathcal{Q}}^{(\bm{\mu})}\big\}_{\bm{\mu}\in \mathcal{Q}_k}$ due to (\ref{eXQdecomp}).}
All estimators are computed by the subroutine \texttt{ESTIMATE}:
\begin{align*}
  \Big[ \left\{e_{Y\mathcal{P}}|_S;\; {S \in \Box_{h_k}}\right\},\;
  \big\{e_{X\mathcal{Q}}^{(\bm{\mu})};\; {\bm{\mu}\in \mathcal{Q}_k}\big\} \Big] =
  \texttt{ESTIMATE}(T, f,  h_k, \mathcal{P}_k, \mathcal{Q}_k,  u^{(k)}).
\end{align*}
Here, as discussed in section~\ref{subsec:dis}, the detail index set is built as follows:
if $\mathcal{J}$ is finite, then the natural choice of $\mathcal{Q}_k$ is
$\mathcal{Q}_k = \mathcal{N}(\mathcal{P}_k, \mathcal{J})\backslash\mathcal{P}_k$;
if $\mathcal{J}$ is infinite, then we build $\mathcal{Q}_k$ heuristically as
$\mathcal{Q}_k = \mathcal{N}(\mathcal{P}_k, \mathcal{N}(\mathcal{P}_k, \mathcal{P}_k))\backslash\mathcal{P}_k$.
Then we calculate the total error estimate $\eta^{(k)}$ via the first equation in~(\ref{estB01}).
\begin{algorithm}
\SetAlgoLined
\caption{Adaptive stochastic Galerkin finite element algorithm}
\label{algorithm}
\KwIn{data $T$, $f$; initial edge length $h_0$, initial index set $\mathcal{P}_0 \supseteq \mathcal{P}_{M,1}$;\\
\hspace{38pt}          marking threshold $\theta_{\mathcal{P}}$; tolerance $\epsilon$}
\KwOut{final Galerkin solution $u^{(K)}$, final error estimate $\eta^{(K)}$}
\For{$k = 0,1,2,\dots$}{
$u^{(k)} = \texttt{SOLVE}(T, f,  h_k, \mathcal{P}_k)$\;
$\Big[ \left\{e_{Y\mathcal{P}}|_S;\; {S \in \Box_{h_k}}\right\},\;
  \big\{e_{X\mathcal{Q}}^{(\bm{\mu})};\; {\bm{\mu}\in \mathcal{Q}_k}\big\} \Big] =
  \texttt{ESTIMATE}(T, f,  h_k, \mathcal{P}_k, \mathcal{Q}_k,  u^{(k)})$\; 
$\eta^{(k)} = \Big(\sum_{S \in \Box_{h_k}}\norm{e_{Y\mathcal{P}}|_S}_{B_{0,S}}^2 +
  \sum_{\bm{\mu}\in \mathcal{Q}_k} \big\|{e_{X\mathcal{Q}}^{(\bm{\mu})}}\big\|_{B_0}^2\Big)^{1/2}$\;
  \eIf{$\eta^{(k)} < \epsilon$}
{
$K \coloneqq k$;\ {\bf break}\;}
{
$\mathcal{M}_k =
  \texttt{MARK}\Big( \Big\{ \big\|{e_{X\mathcal{Q}}^{(\bm{\mu})}}\big\|_{B_0};\; \bm{\mu}\in \mathcal{Q}_k \Big\}, \theta_{\mathcal{P}}\Big)$\;
\eIf{$\sum_{S \in \Box_{h_k}} \norm{e_{Y\mathcal{P}}|_S}_{B_{0,S}}^2 \ge
         \sum_{\bm{\mu}\in \mathcal{M}_k} \big\|{e_{X\mathcal{Q}}^{(\bm{\mu})}}\big\|_{B_0}^2$}
 {
  $h_{k+1} = h_k/2$;\ \ $\mathcal{P}_{k+1} = \mathcal{P}_k$\;}
 {
  $h_{k+1} = h_k$;\ \ $\mathcal{P}_{k+1} = \mathcal{P}_k \cup  \mathcal{M}_k$\;}
}
}
\end{algorithm}

If the error estimate $\eta^{(k)}$ exceeds the prescribed tolerance $\epsilon$,
then an enriched finite-dimensional space $V_{X\mathcal{P}}^{k+1} \supset V_{X\mathcal{P}}^k$ must be constructed.
Before doing this, we identify those indices $\mu \in \mathcal{Q}_k$ that yield larger contributing estimators $e_{X\mathcal{Q}}^{(\mu)}$.
To that end, we employ the D{\"o}rfler marking strategy~\cite{doerfler}.
Specifically, we fix a threshold parameter $\theta_{\mathcal{P}} \in (0,1]$ and build a minimal subset
$\mathcal{M}_k \subseteq \mathcal{Q}_k$ 
such that
\begin{align} \label{eq:doerfler}
   \sum_{\bm{\mu} \in \mathcal{M}_k} \big\|{e^{(\bm{\mu})}_{X \mathcal{Q}}}\big\|_{B_0}^2 \geq
   \theta_{\mathcal{P}} \sum_{\bm{\mu} \in \mathcal{Q}_k} \big\|{e^{(\bm{\mu})}_{X \mathcal{Q}}}\big\|_{B_0}^2.
\end{align}
The marked index set is generated by the subroutine \texttt{MARK}:
\begin{align*}
   \mathcal{M}_k =
   \texttt{MARK}\Big( \Big\{ \big\|{e_{X\mathcal{Q}}^{(\bm{\mu})}}\big\|_{B_0};\; \bm{\mu}\in \mathcal{Q}_k \Big\}, \theta_{\mathcal{P}}\Big).
\end{align*}

\begin{table}
\setlength{\tabcolsep}{2pt}
  \centering
  \footnotesize
\begin{tabular}{ccccc}  
\toprule
$\sigma$ & 0.4 & 0.6 & 0.8 & 1 \\
\midrule
$t$, sec & 1.8181e+02 & 5.4331e+02 & 4.1346e+03 & 5.7141e+03 \\
$K$  & 4 & 5 & 6 & 6 \\
$\eta^{(K)}$  & 1.8628e-02 & 1.6762e-02 & 1.1463e-02 & 1.4294e-02 \\
\midrule
$h_K$ & $2^{-4}$ & $2^{-5}$  & $2^{-5}$ & $2^{-5}$ \\
$\# \mathcal{N}(\mathcal{P}_K, \mathcal{Q}_K)$ & 125 & 125 & 999 & 1,339 \\
$N_K$ & 6,534 & 25,350 & 71,825 & 80,275\\
\midrule
$\mathcal{P}$ & 
\begin{tabular}[t]{cc}
$k = 0$ & (0 0 0 0 0)\\
& (0 0 0 0 1) \\
& (0 0 0 1 0) \\
& (0 0 1 0 0) \\
& (0 1 0 0 0) \\
& (1 0 0 0 0)
\end{tabular}
 & 
 \begin{tabular}[t]{cc}
$k = 0$ & (0 0 0 0 0)\\
& (0 0 0 0 1) \\
& (0 0 0 1 0) \\
& (0 0 1 0 0) \\
& (0 1 0 0 0) \\
& (1 0 0 0 0)
\end{tabular}
&
\begin{tabular}[t]{cc}
$k = 0$ & (0 0 0 0 0)\\
& (0 0 0 0 1) \\
& (0 0 0 1 0) \\
& (0 0 1 0 0) \\
& (0 1 0 0 0) \\
& (1 0 0 0 0) \\
\midrule
$k = 5$ & (2 0 0 0 0) \\
& (1 1 0 0 0) \\
& (1 0 1 0 0) \\
& (1 0 0 1 0) \\
& (1 0 0 0 1) \\
& (0 1 1 0 0) \\
& (0 1 0 1 0) \\
& (0 0 1 0 1) \\
& (0 2 0 0 0) \\
& (0 0 2 0 0) \\
& (0 1 0 0 1)
\end{tabular}
&
\begin{tabular}[t]{cc}
$k = 0$ & (0 0 0 0 0)\\
& (0 0 0 0 1) \\
& (0 0 0 1 0) \\
& (0 0 1 0 0) \\
& (0 1 0 0 0) \\
& (1 0 0 0 0) \\
\midrule
$k = 5$ & (2 0 0 0 0) \\
& (1 1 0 0 0) \\
& (1 0 1 0 0) \\
& (1 0 0 1 0) \\
& (1 0 0 0 1) \\
& (0 1 1 0 0) \\
& (0 1 0 1 0) \\
& (0 0 1 0 1) \\
& (2 0 1 0 0) \\
& (2 1 0  0 0) \\
& (0 2 0 0 0)\\
& (0 0 2 0 0) \\
& (1 1 1 0 0)
\end{tabular}
\\
\bottomrule
\end{tabular}
\caption{The results of running Algorithm~\ref{algorithm} for the model problem~\eqref{paraPDE} with
$T(\bm{x}, \bm{y}) = \exp(a(\bm{x}, \bm{y}))$
and the decomposition of $a(\bm{x}, \bm{y})$ as in Example~\ref{tp1} with $\ell_1 = \ell_2 =1$.
}
\label{tab:adapt_1}
\end{table}
In order to construct the enriched approximation space, we either enrich the finite element space by uniformly refining the mesh
(in this case, we define $V_{X\mathcal{P}}^{k+1,1} \coloneqq X(h_{k+1}) \otimes P_{\mathcal{P}_k}$ with $h_{k+1} = h_k/2$), or
enrich the polynomial space by including the (marked) indices from $\mathcal{M}_k \subseteq \mathcal{Q}_k$
(i.e., we set $V_{X\mathcal{P}}^{k+1,2} \coloneqq X(h_k) \otimes P_{\mathcal{P}_{k+1}}$
with $\mathcal{P}_{k+1} = \mathcal{P}_k \cup \mathcal{M}_k$).
Let $u^{(k+1,l)} \in V_{X\mathcal{P}}^{k+1,l}$ ($l = 1,2$) denote the corresponding enhanced Galerkin approximations
(note that none of these approximations is computed at this stage).
In order to determine the refinement type (spatial or parametric), we recall that Theorem~\ref{the:third} implies that
$\left(\sum_{S \in \Box_h}\norm{e_{Y\mathcal{P}}|_S}_{B_{0,S}}^2 \right)^{1/2}$ and
$\Big(\sum_{\bm{\mu}\in \mathcal{M}_k} \big\|{e_{X\mathcal{Q}}^{(\bm{\mu})}}\big\|_{B_0}^2\Big)^{1/2}$
provide  effective estimates for the error reductions
$\big\|{u^{(k+1,1)} - u^{(k)}}\big\|_B$ and $\big\|{u^{(k+1,2)} - u^{(k)}}\big\|_B$, respectively.
Therefore, if $\left(\sum_{S \in \Box_h}\norm{e_{Y\mathcal{P}}|_S}_{B_{0,S}}^2 \right)^{1/2} $ is greater than or equal to
$\big(\sum_{\bm{\mu}\in \mathcal{M}_k} \big\|{e_{X\mathcal{Q}}^{(\bm{\mu})}}\big\|_{B_0}^2\big)^{1/2}$,
we define $V_{X\mathcal{P}}^{k+1} := V_{X\mathcal{P}}^{k+1,1} $, leading to spatial refinement;
otherwise, we set $V_{X\mathcal{P}}^{k+1} := V_{X\mathcal{P}}^{k+1,2} $, leading to parametric refinement.
Then a more accurate Galerkin solution $u^{(k+1)} \in V_{X\mathcal{P}}^{k+1}$ is computed.
The process is then repeated until the tolerance is met.

The complete adaptive algorithm is listed in Algorithm~\ref{algorithm}.

\section{Numerical experiments: adaptivity} \label{sec:numer:adapt}

\begin{figure}
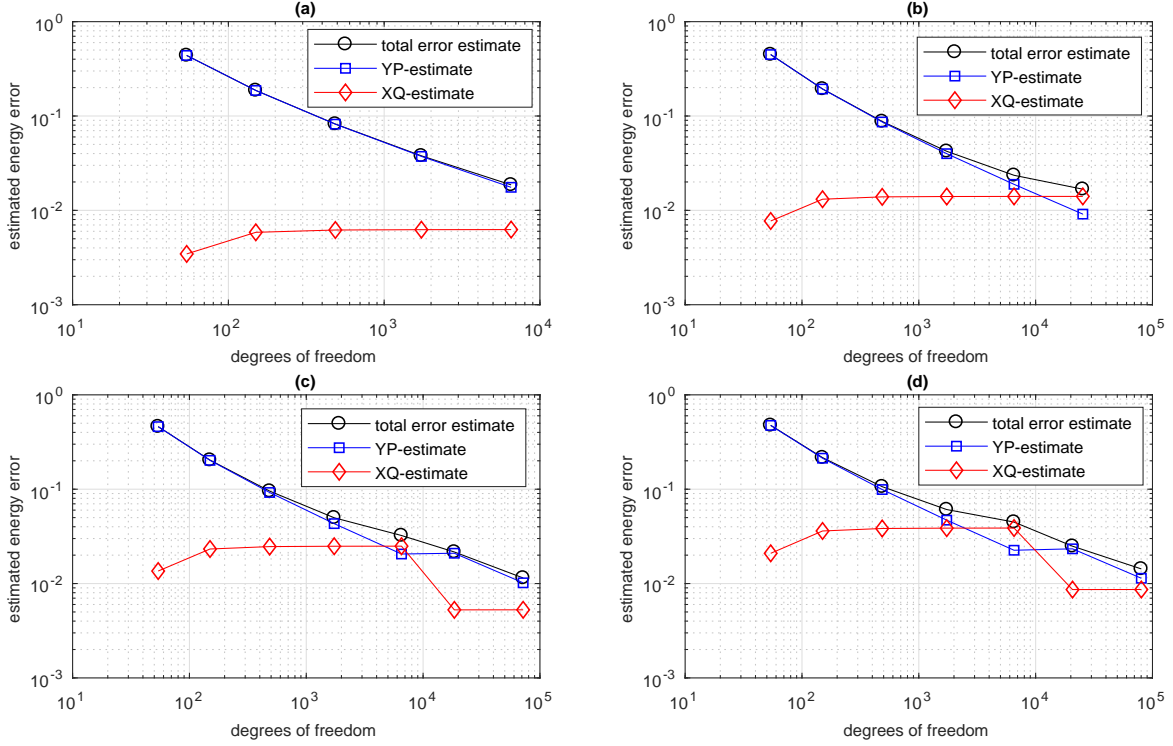

\centering
\begin{overpic}[width=0.495\textwidth]{{adapt_output_example_1_1_err_est}.eps}
\end{overpic}
\begin{overpic}[width=0.495\textwidth]{{adapt_output_example_1_2_err_est}.eps}
\end{overpic}
\begin{overpic}[width=0.495\textwidth]{{adapt_output_example_1_3_err_est}.eps}
\end{overpic}
\begin{overpic}[width=0.495\textwidth]{{adapt_output_example_1_4_err_est}.eps}
\end{overpic}
\caption{Energy error estimates at each step of the adaptive algorithm for the model problem~\eqref{paraPDE} with
$T(\bm{x}, \bm{y}) = \exp(a(\bm{x}, \bm{y}))$
and the decomposition of $a(\bm{x}, \bm{y})$ as in Example~\ref{tp1} with $\ell_1 = \ell_2 =1$:
(a) $\sigma = 0.4$; (b) $\sigma = 0.6$; (c) $\sigma = 0.8$; (d) $\sigma = 1$.}
\label{fig:err_est_1}
\end{figure}

In this section, we test the performance of Algorithm~\ref{algorithm} 
for the model problem~(\ref{paraPDE}) with
\fx{non-affine parametric representations of the diffusion coefficient}.
As in section~\ref{sec:numer:errest}, numerical results are presented for bilinear (Q1) spatial approximations
on uniform grids $\Box_h$ of square elements with edge length $h$.
In all experiments, 
we set the marking parameter $\theta_{\mathcal{P}} = 0.9$ in~\eqref{eq:doerfler}
and run the adaptive algorithm with the stopping tolerance $\epsilon = 2 \times 10^{-2}$.

In our first set of experiments in this section, we consider the model problem~(\ref{paraPDE}) on the domain
$D = (-1,1)^2$ and we set $f(\bm{x}) = 1$, $T(\bm{x}, \bm{y}) = \exp(a(\bm{x}, \bm{y}))$,
where $a(\bm{x}, \bm{y})$ is represented as in~\eqref{kl} by using
the truncated Karhunen--Lo\`{e}ve expansion in Example~\ref{tp1}.
As in section~\ref{sec:numer:errest}, we assume that
$y_m$ are the images of independent and identically distributed random variables
\fx{that follow} the truncated Gaussian probability density function in~\eqref{pdftG} with  $\sigma_0 = 1$.
We fix $M = 5$ in~\eqref{kl} and for each $\sigma \in \{0.4,\, 0.6,\, 0.8,\, 1\}$ in~(\ref{cov}) we run the adaptive algorithm.
The results of these computations are presented in Table~\ref{tab:adapt_1} and Figures~\ref{fig:err_est_1} and~\ref{fig:eff_ind_1}.

\begin{figure}
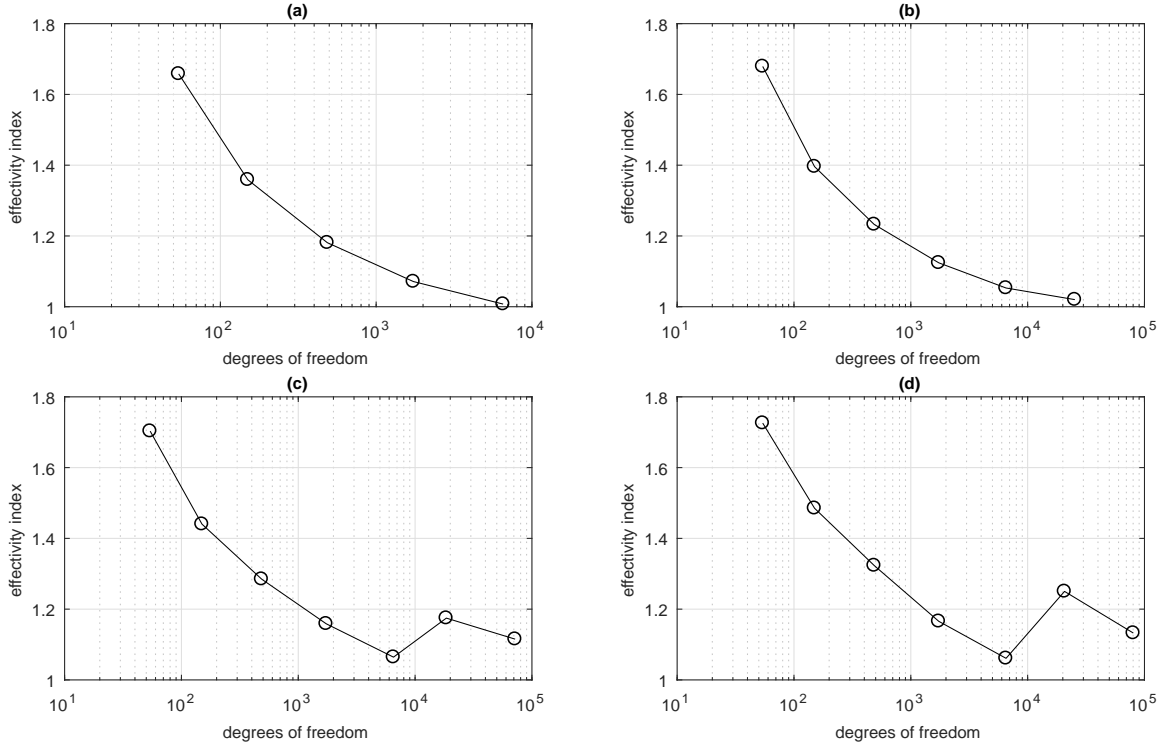

\centering
\begin{overpic}[width=0.495\textwidth]{{adapt_output_example_1_1_eff_ind}.eps}
\end{overpic}
\begin{overpic}[width=0.495\textwidth]{{adapt_output_example_1_2_eff_ind}.eps}
\end{overpic}
\begin{overpic}[width=0.495\textwidth]{{adapt_output_example_1_3_eff_ind}.eps}
\end{overpic}
\begin{overpic}[width=0.495\textwidth]{{adapt_output_example_1_4_eff_ind}.eps}
\end{overpic}
\caption{The effectivity indices for the Galerkin solutions at each iteration of the adaptive algorithm
for the model problem~\eqref{paraPDE} with $T(\bm{x}, \bm{y}) = \exp(a(\bm{x}, \bm{y}))$
and the decomposition of $a(\bm{x}, \bm{y})$ as in Example~\ref{tp1} with $\ell_1 = \ell_2 =1$:
(a) $\sigma = 0.4$; (b) $\sigma = 0.6$; (c) $\sigma = 0.8$; (d)~$\sigma = 1$.}
\label{fig:eff_ind_1}
\end{figure}

In Table~\ref{tab:adapt_1}, for each computation we report the overall computational time $t$ (in seconds),
the number of iterations $K$ needed to reach the prescribed tolerance,
the final error estimate $\eta^{(K)}$, the edge length $h_K$ of the final mesh,
the cardinality of the index set $\mathcal{N}(\mathcal{P}_K, \mathcal{Q}_K)$ that is used in
calculating the estimator $e_{X\mathcal{Q}}$ (see~\eqref{eXQB0}),
the final number of degrees of freedom $N_K := \text{dim}(V_{X\mathcal{P}}^K)$,
and the evolution of the index set~$\mathcal{P}$.

From Table~\ref{tab:adapt_1}, we find that in the experiments with larger values of $\sigma$,
the tolerance is met by the final Galerkin solution calculated on a more refined spatial grid $\Box_{h_K}$
and with a larger index set $\mathcal{P}_K$.
This leads to significant increase in computational times and is due to a dramatic expansion
of the index set $\mathcal{N}(\mathcal{P}_K, \mathcal{Q}_K)$ as $\sigma$ increases.
For example, as $\sigma$ increases from $0.6$ to $0.8$, the cardinality of $\mathcal{N}(\mathcal{P}_K, \mathcal{Q}_K)$
increases by approximately a factor of 8, while the cardinality of $\mathcal{P}_K$ only increases by approximately a factor of~3.
Greater cardinality of $\mathcal{N}(\mathcal{P}_K, \mathcal{Q}_K)$ means longer computational time
for finding $e_{X\mathcal{Q}}$ via~\eqref{eXQB0}, taking a significant share of the overall computational time.

In Figure~\ref{fig:err_est_1}, we plot the error estimates
$\eta$, $\norm{e_{Y \mathcal{P}}}_{B_0}$ and $\norm{e_{X \mathcal{Q}}}_{B_0}$ at each iteration of the adaptive loop.
In Figure~\ref{fig:err_est_1}(a) (i.e., for $\sigma = 0.4$),
we observe that $\norm{e_{Y \mathcal{P}}}_{B_0}$ is greater than $\norm{e_{X \mathcal{Q}}}_{B_0}$
throughout the computation, that is why no parametric refinement is performed in this case before the tolerance is met.
In Figure~\ref{fig:err_est_1}(b) ($\sigma = 0.6$), we find that $\norm{e_{Y \mathcal{P}}}_{B_0}$ is only smaller than $\norm{e_{X \mathcal{Q}}}_{B_0}$
at the final iteration, when the total error estimate is below the tolerance; thus no parametric refinement is performed in this case either.
In the ex\-pe\-ri\-ments with $\sigma = 0.8$ and $\sigma = 1$, one parametric refinement is needed before the tolerance is met
(see Figures~\ref{fig:err_est_1}(c) and~\ref{fig:err_est_1}(d)).
Note that more indices were activated in the case of~$\sigma = 1$.

\begin{table}
\setlength{\tabcolsep}{2pt}
  \centering
  \footnotesize
\begin{tabular}{ccccc}  
\toprule
$\bar{\alpha}$ & 0.4 & 0.6 & 0.8 & 1 \\
\midrule
$t$, sec & 1.5620e+01 & 1.8369e+01 & 3.2806e+01 & 1.1202e+02 \\
$K$  & 4 & 5 & 6 & 8 \\
$\eta^{(K)}$  & 1.4633e-02 & 1.8236e-02 & 1.6919e-02 & 1.8534e-02 \\
\midrule
$h_K$ & $2^{-5}$ & $2^{-5}$  & $2^{-6}$ & $2^{-7}$ \\
$\# (\mathcal{N}(\mathcal{P}_K, \mathcal{Q}_K) \cap \mathcal{P}_{5,2})$ & 20 & 20 & 20 & 20 \\
$N_K$ & 6,534 & 9,801 & 38,025 & 249,615\\
\midrule
$\mathcal{P}$ & 
\begin{tabular}[t]{cc}
$k = 0$ & (0 0 0 0 0)\\
& (0 0 0 0 1) \\
& (0 0 0 1 0) \\
& (0 0 1 0 0) \\
& (0 1 0 0 0) \\
& (1 0 0 0 0)
\end{tabular}
 & 
 \begin{tabular}[t]{cc}
$k = 0$ & (0 0 0 0 0)\\
& (0 0 0 0 1) \\
& (0 0 0 1 0) \\
& (0 0 1 0 0) \\
& (0 1 0 0 0) \\
& (1 0 0 0 0) \\ 
\midrule
$k = 5$ &(2 0 0 0 0) \\
& (1 1 0 0 0) \\
& (3 0 0 0 0)
\end{tabular}
&
\begin{tabular}[t]{cc}
$k = 0$ & (0 0 0 0 0)\\
& (0 0 0 0 1) \\
& (0 0 0 1 0) \\
& (0 0 1 0 0) \\
& (0 1 0 0 0) \\
& (1 0 0 0 0) \\
\midrule
$k = 5$ &(2 0 0 0 0) \\
& (1 1 0 0 0) \\
& (3 0 0 0 0)
\end{tabular}
&
\begin{tabular}[t]{cc}
$k = 0$ & (0 0 0 0 0)\\
& (0 0 0 0 1) \\
& (0 0 0 1 0) \\
& (0 0 1 0 0) \\
& (0 1 0 0 0) \\
& (1 0 0 0 0) \\
\midrule
$k = 5$ &(2 0 0 0 0) \\
& (1 1 0 0 0) \\
& (3 0 0 0 0) \\
\midrule
$k = 7$ & (2 1 0 0 0) \\
& (4 0 0 0 0) \\
& (1 0 1 0 0) \\
& (5 0 0 0 0) \\
& (3 1 0 0 0) \\
& (1 0 0 1 0)
\end{tabular}
\\
\bottomrule
\end{tabular}
\caption{The results of running Algorithm~\ref{algorithm} for the model problem~\eqref{paraPDE} with
$T(\bm{x}, \bm{y}) = a^2(\bm{x}, \bm{y})$
and the decomposition of $a(\bm{x}, \bm{y})$ as in Example~\ref{tp2} with $\bar{\sigma} = 2$.}
\label{tab:adapt_2}
\end{table}

In Figure~\ref{fig:eff_ind_1}, we plot the effectivity indices computed via~(\ref{thetaeff}) with $i=0$ at each iteration of the algorithm.
Here, the reference solution $u_{\text{ref}}$ in each experiment is computed using biquadratic (Q2) spatial approximations
on a fine grid $\Box_{h_{\text{ref}}}$ with $h_{\text{ref}} = h_K/2$ and employing the polynomial space \fx{$P_{M,d_K +1}$}
with \fx{$d_K$} being the highest \fx{(total)} degree of the polynomials in $P_{\mathcal{P}_K}$.
We can see that for all experiments the effectivity indices are within the interval $(1, 2)$ throughout all iterations.

In the final set of experiments, we consider the model problem~(\ref{paraPDE}) on the domain
$D = (0,1)^2$ and we set $f(\bm{x}) = 1$, $T(\bm{x}, \bm{y}) = a^2(\bm{x}, \bm{y})$,
where $a(\bm{x}, \bm{y})$ is represented as in~\eqref{kl} with
the coefficient functions $a_m$ ($m = 0,\ldots,M$) chosen as in Example~\ref{tp2}.
We again assume that $y_m$ are the images of independent and identically distributed random variables
\fx{that follow} the truncated Gaussian probability density function in~\eqref{pdftG} with $\sigma_0 = 1$.
Note that for $T = a^2$ and $a$ given by~\eqref{kl}, the gPC expansion~(\ref{coeexpansion})
has a finite number of non-zero terms;
the formulae for calculating the expansion coefficients $t_{\bm{\gamma}}$
in this case are given in Appendix~\ref{appA}.
We fix $M = 5$ in~\eqref{kl} and
for each $\bar{\alpha} \in \{0.4,\,0.6,\,0.8,\,1\}$ in (\ref{exeam}) we run the adaptive algorithm.
The results of computations are presented in Table~\ref{tab:adapt_2} and Figures~\ref{fig:err_est_2} and~\ref{fig:eff_ind_2}.

From Table~\ref{tab:adapt_2} and Figure~\ref{fig:err_est_2}, we find that no parametric refinement
is performed in the experiment with $\bar{\alpha} = 0.4$;
one parametric refinement is performed in the experiments with $\bar{\alpha} = 0.6$ and $\bar{\alpha} = 0.8$;
and two parametric refinements are performed in the experiment with $\bar{\alpha} = 1$.
Since for the model problem in this set of experiments, the gPC expansion~\eqref{coeexpansion} of the diffusion coefficient $T = a^2$
reduces to a finite sum over the index set $\mathcal{J} \subset \mathcal{P}_{5,2}$ (see Appendix~\ref{appA}),
the sum in~(\ref{eXQB0}) is over the set
$ \mathcal{N}(\mathcal{P}_K, \mathcal{Q}_K) \cap \mathcal{P}_{5,2}$.
We observe from Table~\ref{tab:adapt_2} that
$\#(\mathcal{N}(\mathcal{P}_K, \mathcal{Q}_K) \cap \mathcal{P}_{5,2})$
does not change throughout this set of experiments.
This partly explains the reason why the overall computational times for larger values of coefficient parameter
(i.e., the parameter $\bar\alpha$ in this set of experiments)
do not increase as significantly as they do in the first set of experiments in this~section.

In Figure~\ref{fig:eff_ind_2}, we plot the effectivity indices for the error estimate at each iteration of the algorithm
(here, the reference Galerkin solution is computed similarly to other experiments).
We can see that for all experiments in this set, the effectivity indices are within the interval $(0.5, 2.5)$ throughout all iterations.

\begin{figure}
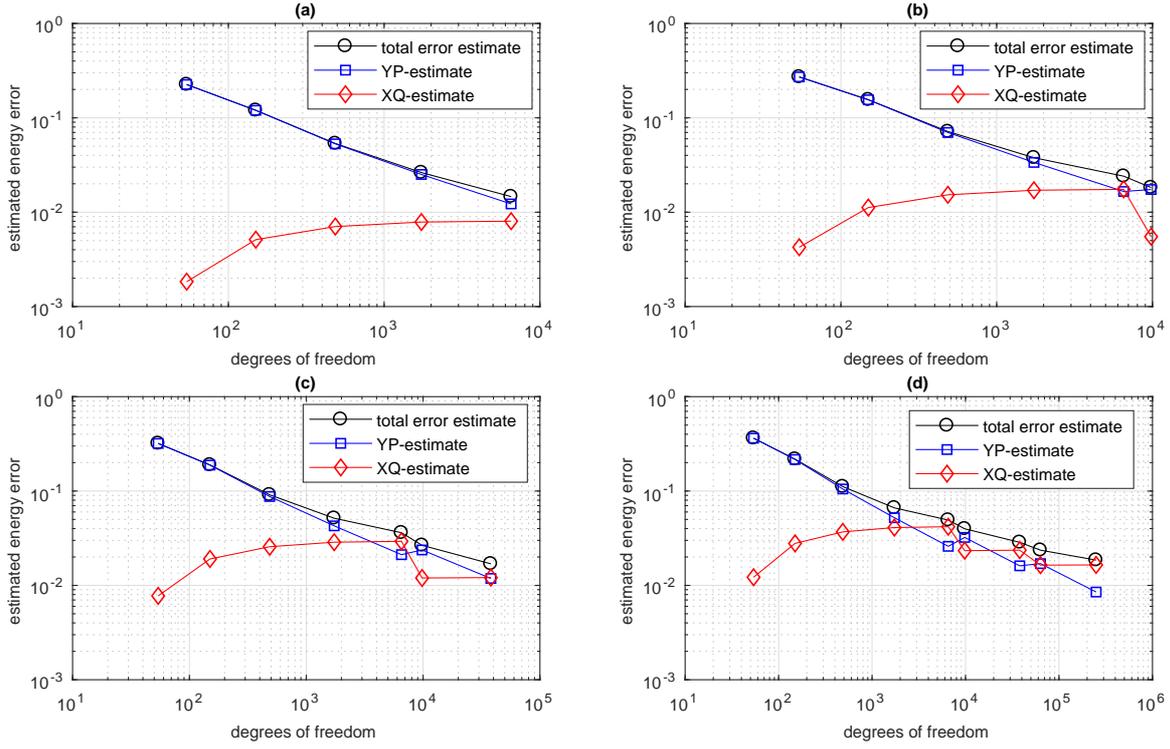

\centering
\begin{overpic}[width=0.495\textwidth]{{adapt_output_example_gpc_type_2_sigma_0.4_err_est}.eps}
\end{overpic}
\begin{overpic}[width=0.495\textwidth]{{adapt_output_example_gpc_type_2_sigma_0.6_err_est}.eps}
\end{overpic}
\begin{overpic}[width=0.495\textwidth]{{adapt_output_example_gpc_type_2_sigma_0.8_err_est}.eps}
\end{overpic}
\begin{overpic}[width=0.495\textwidth]{{adapt_output_example_gpc_type_2_sigma_1_err_est}.eps}
\end{overpic}
\caption{Energy error estimates at each step of the adaptive algorithm for the model problem~\eqref{paraPDE} with
$T(\bm{x}, \bm{y}) = a^2(\bm{x}, \bm{y})$ and the decomposition of $a(\bm{x}, \bm{y})$ as in Example~\ref{tp2} with $\bar{\sigma} = 2$:
(a) $\bar{\alpha} = 0.4$; (b) $\bar{\alpha} = 0.6$; (c) $\bar{\alpha} = 0.8$; (d) $\bar{\alpha} = 1$.}
\label{fig:err_est_2}
\end{figure}

\begin{figure}
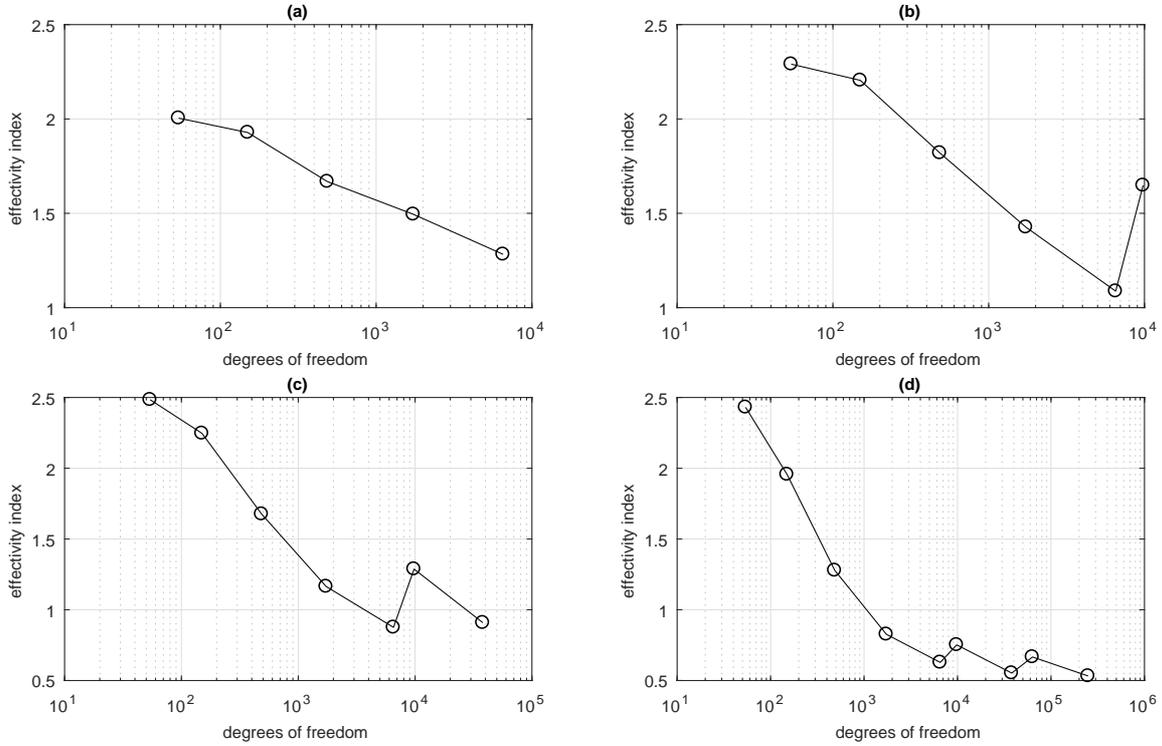

\centering
\begin{overpic}[width=0.495\textwidth]{{adapt_output_example_gpc_type_2_sigma_0.4_eff_ind}.eps}
\end{overpic}
\begin{overpic}[width=0.495\textwidth]{{adapt_output_example_gpc_type_2_sigma_0.6_eff_ind}.eps}
\end{overpic}
\begin{overpic}[width=0.495\textwidth]{{adapt_output_example_gpc_type_2_sigma_0.8_eff_ind}.eps}
\end{overpic}
\begin{overpic}[width=0.495\textwidth]{{adapt_output_example_gpc_type_2_sigma_1_eff_ind}.eps}
\end{overpic}
\caption{The effectivity indices for the Galerkin solutions at each iteration of the adaptive algorithm
for the model problem~\eqref{paraPDE} with
$T(\bm{x}, \bm{y}) = a^2(\bm{x}, \bm{y})$ and the decomposition of $a(\bm{x}, \bm{y})$ as in Example~\ref{tp2} with $\bar{\sigma} = 2$:
(a) $\bar{\alpha} = 0.4$; (b) $\bar{\alpha} = 0.6$; (c) $\bar{\alpha} = 0.8$; (d) $\bar{\alpha} = 1$.}
\label{fig:eff_ind_2}
\end{figure}

\section{Concluding remarks}
\label{sec:con}

Adaptivity is a critical ingredient of effective algorithms for numerical solution of PDE problems with parametric or uncertain inputs.
In this paper, we consider a linear elliptic PDE with a generic parametric coefficient satisfying minimal assumptions that guarantee
well-posedness of the weak formulation in standard Lebesgue--Bochner spaces.
Building on earlier works for PDEs with affine-parametric representation of input data,
we have performed a posteriori error analysis of Galerkin approximations
and designed an adaptive solution algorithm for the considered problem.
An important contribution of this work is that it opens the possibility of solving elliptic PDE problems
with \emph{non-affine} parametric representations of input data using Galerkin approximations with rigorous error control,
thus, providing an effective alternative to traditional sampling techniques for such problems.
Furthermore, our proof of concept implementation and extensive numerical tests demonstrate
the effectiveness of our error estimation strategy and the practicality of the developed adaptive algorithm
for this class of parametric PDE problems.

\begin{appendices}
\crefalias{section}{appsec} 

\section{gPC expansion coefficients for parametric exponential and quadratic functions} \label{appA}


\fx{In this paper, we work with two forms of the diffusion coefficient $T(\bm{x}, \bm{y})$: $T(\bm{x}, \bm{y}) = \exp(a(\bm{x}, \bm{y}))$
and $T(\bm{x}, \bm{y}) = a^2(\bm{x}, \bm{y})$, where $a(\bm{x}, \bm{y})$ is given by~(\ref{kl}).}
\fx{For $T = \exp(a)$, we are able to separate the variables $y_m$.}
Specifically, for $M >1$, the integral in (\ref{coeexpansioncoe}) can be expressed as a product of 1D integrals as follows:
\begin{align}
\label{tgammaM}
t_{\bm{\gamma}}(\bm{x}) &=\exp(a_0(\bm{x})) \prod_{m=1}^M\int_{\Gamma_m} \exp(a_m(\bm{x}) y_m) p_{\gamma_m}^m(y_m) q_m(y_m) \dif y_m. 
\end{align}
The 1D integrals with respect to  $y_m$ in (\ref{tgammaM}) can be approximated numerically by using Gaussian quadrature.

For $T= a^2$, the infinite sum in (\ref{coeexpansion}) is naturally truncated
to a finite sum of terms indexed by $\bm{\gamma} \in \mathcal{P}_{M,2}$. 
Indeed, we have
\begin{align*}
T = a_0^2 + 2a_0\sum_{m=1}^M  a_m y_m + \sum_{m=1}^M a_m^2 y_m^2 + 2\sum_{m = 2}^M \sum_{n=1}^{m-1} a_m a_n y_m y_n
\end{align*}
and 
\begin{align*}
\label{tgammaMg1}
t_{\bm{\gamma}}(\bm{x}) = ~ & a_0^2\prod_{i=1}^M\int_{\Gamma_i} p_{\gamma_i} ^i q_i \dif y_i + 2 a_0 \sum_{m=1}^Ma_m\Bigg( \prod_{
\stackrel{i=1}{ i \neq m}}^M\int_{\Gamma_i} p_{\gamma_i} ^i q_i \dif y_i \Bigg)\int_{\Gamma_m} y_mp_{\gamma_m} ^m q_m \dif y_m \nonumber \\
&+ \sum_{m=1}^Ma_m^2\Bigg( \prod_{\stackrel{i=1}{i \neq m}}^M\int_{\Gamma_i} p_{\gamma_i} ^i q_i \dif y_i \Bigg)\int_{\Gamma_m} y_m^2p_{\gamma_m} ^m q_m \dif y_m \nonumber \\
&+2\sum_{m = 2}^M \sum_{n=1}^{m-1} a_m a_n\Bigg( \prod_{\stackrel{i=1}{i \neq m,n}}^M\int_{\Gamma_i} p_{\gamma_i} ^i q_i \dif y_i \Bigg)\int_{\Gamma_m} y_mp_{\gamma_m} ^m q_m \dif y_m \int_{\Gamma_n} y_n p_{\gamma_n}^n q_n \dif y_n.
\end{align*}
The orthogonality of $\{p_{n}^m\}_{n \in \mathbb{N}_0}$ gives the following conditions:
\begin{align*}
\int_{\Gamma_i} p_{\gamma_i} ^i q_i \dif y_i &= \begin{cases}
1 &\text{ for } \gamma_i = 0, \\
0 &  \text{ for } \gamma_i \neq 0,
\end{cases}
\\
\int_{\Gamma_i} y_ip_{\gamma_i} ^i q_i \dif y_i &= 0 \text{ for } \gamma_i >1, \quad
\int_{\Gamma_i} y_i^2 p_{\gamma_i} ^i q_i \dif y_i = 0 \text{ for } \gamma_i >2;
\end{align*}
this implies that $t_{\bm{\gamma}} = 0$
for $\bm{\gamma} \notin \mathcal{P}_{M,2}$.
If $M = 1$, then there are only three non-zero terms in (\ref{coeexpansion}) that are indexed by
$\bm{\gamma} \in\mathcal{P}_{1,2} \subset \mathbb{N}_0$ with the spatial expansion coefficients as follows:
\begin{align*}
t_{\bm{\gamma}}(\bm{x}) =
\begin{cases}
 a_0^2 + 2 a_0 a_1\int_{\Gamma_1} y_1 q_1(y_1) \dif y_1 + a_1^2\int_{\Gamma_1} y_1^2 q_1(y_1) \dif y_1 &
 \text{for }  \bm{\gamma} = (0), \\
 2 a_0 a_1\int_{\Gamma_1} y_1p_1^1(y_1) q_1(y_1) \dif y_1 + a_1^2\int_{\Gamma_1} y_1^2p_1^1(y_1) q_1(y_1) \dif y_1 &
 \text{for }  \bm{\gamma} = (1),\\
 a_1^2\int_{\Gamma_1} y_1^2p_2^1(y_1) q_1(y_1) \dif y_1 &
 \text{for }  \bm{\gamma} = (2).
\end{cases}
\end{align*}
If $M>1$,  then there are four types of non-zero items in (\ref{coeexpansion}) that are indexed by $\bm{\gamma} \in \mathcal{P}_{M,2}$
with the following spatial expansion coefficients:
\begin{enumerate}
\item[(i)] if $\bm{\gamma} = \bm{0}$, then
\begin{align*}
t_{\bm{\gamma}}(\bm{x}) &=   a_0^2 + 2 a_0 \sum_{m=1}^Ma_m\int_{\Gamma_m} y_m q_m \dif y_m + \sum_{m=1}^Ma_m^2\int_{\Gamma_m} y_m^2 q_m \dif y_m \nonumber \\
&\quad +2\sum_{m = 2}^M \sum_{n = 1}^{m-1} a_m a_n\int_{\Gamma_m} y_m q_m \dif y_m \int_{\Gamma_n} y_n q_n \dif y_n;
\end{align*}
\item[(ii)] if $\bm{\gamma}$ has only one non-zero element, $\gamma_j = 1$, $1 \leq j \leq M$, then
\begin{align*}
t_{\bm{\gamma}}(\bm{x}) &= 2 a_0 a_j\int_{\Gamma_j} y_j p_1 ^j q_j \dif y_j + a_j^2\int_{\Gamma_j} y_j^2 p_1 ^j q_j \dif y_j \nonumber \\
&\quad +2 \sum_{\stackrel{n = 1}{n \neq j}}^{M}a_j a_n\int_{\Gamma_j} y_j p_1 ^j q_j \dif y_j \int_{\Gamma_n} y_n q_n \dif y_n;
\end{align*}
\item[(iii)] if $\bm{\gamma}$ has only one non-zero element, $\gamma_j = 2$, $1 \leq j \leq M$, then
\begin{align*}
t_{\bm{\gamma}}(\bm{x}) = a_j^2 \int_{\Gamma_j} y_j^2 p_2 ^j q_j \dif y_j;
\end{align*}
\item[(iv)] if $\bm{\gamma}$ has only two non-zero elements, $\gamma_i = \gamma_j = 1$, $1 \leq i < j \leq M$, then
\begin{align*}
t_{\bm{\gamma}}(\bm{x}) = 2 a_i a_j \int_{\Gamma_i} y_i p_1 ^i q_i \dif y_i  \int_{\Gamma_j}y_j p_1 ^j q_j \dif y_j .
\end{align*}
\end{enumerate}
Thus, the index sets $\mathcal{N}(\mathcal{P}, \mathcal{P})$ in \eqref{fullGalerkin}, \eqref{eYPB0}, \eqref{eYPB1},
$\mathcal{N}(\mathcal{P}, \mathcal{Q})$ in \eqref{eXQB0}, \eqref{eXQB1},
and $\mathcal{N}(\mathcal{Q}, \mathcal{Q})$ in \eqref{eXQB1} are replaced by
$\mathcal{N}(\mathcal{P}, \mathcal{P})\cap \mathcal{P}_{M,2}$,
$\mathcal{N}(\mathcal{P}, \mathcal{Q})\cap \mathcal{P}_{M,2}$, and
$\mathcal{N}(\mathcal{Q}, \mathcal{Q})\cap \mathcal{P}_{M,2}$, respectively.

\end{appendices}

\bibliographystyle{siam}
\bibliography{ref} 

\end{document}